\documentclass[a4paper, 11pt]{amsart}
\input xy
\xyoption{all}
\usepackage{amsmath,amsthm,amssymb,color}
\usepackage{graphicx}

\theoremstyle{plain}
\newtheorem{theorem}{Theorem}
\newtheorem{prop}{Proposition}
\theoremstyle{definition}
\newtheorem{definition}{Definition}
\newtheorem{example}{Example}

\newtheorem{remark}[definition]{Remark}

\newtheorem{assumption}{Assumption}

\newcommand{\cD}{\mathcal{D}}

\newcommand{\cF}{\mathcal{F}}

\newcommand{\cU}{\mathcal{U}}

\newcommand{\cf}{{\it cf.}\/ }

\newcommand{\X}{\mathcal{X}}

\newcommand{\cX}{\mathcal{X}}

\newcommand{\N}{\ensuremath{\mathbb N}}

\newcommand{\R}{\ensuremath{\mathbb R}}


\begin{document}

\title[Laplacians on generalized smooth distributions]{Laplacians on generalized smooth distributions as $C^*$-algebra multipliers}

\author{Iakovos Androulidakis}
 
\address{Department of Mathematics\\ National and Kapodistrian University of Athens\\ Panepistimiopolis\\ GR-15784 Athens\\ Greece}
 
\email{iandroul@math.uoa.gr}
\thanks{}

\author{Yuri A. Kordyukov}
 
\address{Institute of Mathematics\\ Ufa Federal Research Centre\\ Russian Academy of Science\\ Chernyshevsky str.\\ 450008 Ufa\\ Russia}
 
\email{yurikor@matem.anrb.ru}
\thanks{}

\subjclass{58J60, 53C17, 46L08, 58B34}  

\keywords{foliation, Hilbert module, Laplacian, hypoelliptic operators, smooth distribution, multiplier}

\date{}

\begin{abstract}
In this paper, we discuss spectral properties of Laplacians associated with an arbitrary smooth distribution on a compact manifold. First, we give a survey of results on generalized smooth distributions on manifolds, Riemannian structures and associated Laplacians. Then, under the assumption that the singular foliation generated by the distribution is regular, 
we prove that the Laplacian associated with the distribution defines an unbounded multiplier on the foliation $C^*$-algebra. To this end, we give the construction of a parametrix.
\end{abstract}

\maketitle

\tableofcontents

\section{Introduction}\label{s:intro}

The purpose of this paper is to discuss spectral properties of Laplacians associated with an arbitrary smooth distribution on a compact manifold. This includes smooth distributions of non constant rank. In fact, these singular distributions are our main focus,  as they arise in sub-Riemannian Geometry.

Recall that  the case of a constant rank distribution $D$ was studied by one of us in \cite{YK1}, \cite{YK2}. This was achieved by considering the Laplacians involved as operators in the longitudinal pseudodifferential calculus of the smallest singular foliation $\cF$ which includes $D$. This calculus was developed by one of us and G. Skandalis in \cite{AS2}, making use of the notion of bisubmersion \cite{AS1}. Also recall that in \cite{AK1} we gave a geometric construction of Laplacians for arbitrary distributions and showed their self-adjointness and hypoellipticity properties.

In this paper, we start with a survey of the results in \cite{AK1}. We recall that modules of vector fields are the appropriate context for the study of generalized smooth distributions on manifolds; then we describe the construction of Riemannian structures and their associated Laplacians. The notion of local presentation of a module of vector fields makes all these developments possible.

Then, assuming that the singular foliation generated by the distribution is regular, we prove that the Laplacian associated with the distribution defines an unbounded multiplier on the foliation $C^*$-algebra. As shown in \cite{Connes79}, the spectral properties of our Laplacians are a consequence of this result. Note that our assumption is justified in view of the distributions arising in sub-Riemannian geometry, which have the bracket generating property.

The proof requires the explicit construction of a parametrix. We show that the construction given by Rothschild and Stein \cite{Rothschild-Stein} can be applied in our case. This result has independent interest.

\section{Distributions as modules of vector fields}\label{s:multi}

We start with the definition for distributions in terms of vector fields, which focuses more on the dynamics involved. It is inspired by the definition of a singular foliation in \cite{AS1}. 

Throughout the paper, $M$ is a smooth manifold with dimension $n$. For a smooth manifold $N$, we denote by $\cX(N)$ (resp. $\cX_c(N)$) the $C^{\infty}(N)$-module of smooth (resp. compactly supported smooth) vector fields on $N$.

Let $\cD$ be a $C^{\infty}(M)$-submodule of $\cX_c(M)$:

\begin{enumerate}
\item  Given an open subset $U$ of $M$, we  put $\iota_U : U \hookrightarrow M$ the inclusion map, and, for a vector field $X \in \cX(M)$, write $X\left|_U\right. = X \circ \iota_U$. The \emph{restriction} of $\cD$ to $U$ is the $C^{\infty}(U)$-submodule of $\cX_c(U)$ generated by $f\cdot X\left|_U\right.$, where $f \in C^{\infty}_c(U)$ and $X \in \cD$. We denote this restriction $\cD\left|_U\right.$.

\item We say that the module $\cD$ is \emph{locally finitely generated} if, for every $x \in M$, there exist an open neighborhood $U$ of $x$ and a finite number of vector fields $X_1,\ldots,X_k$ in $\cX(M)$ such that $\cD\left|_U\right. = C^{\infty}_c(U) \cdot X_1\left|_U\right. + \ldots + C^{\infty}_c(U) \cdot X_k\left|_U\right.$. We say that the vector fields $X_1,\ldots,X_k$ \emph{generate} $\cD\left|_U\right.$ or they are \emph{local generators} of $\cD$.
\end{enumerate}

\begin{definition}\label{dfn:distr}
A (generalized) smooth distribution on $M$ is a locally finitely generated $C^{\infty}(M)$-submodule $\cD$ of the $C^{\infty}(M)$-module $\cX_c(M)$. We denote a distribution as a pair $(M,\cD)$.

A \emph{singular foliation} is a smooth distribution $\cF$ on $M$ which is involutive, namely $[\cF,\cF]\subseteq \cF$.
\end{definition}

\begin{example}
An arbitrary action of a finite-dimensional Lie group on $M$ defines a singular foliation.
\end{example}

\begin{example}\label{ex:nonfoliation}
Let $f \in C^{\infty}(\R^2)$ be defined by $f(x,y) = e^{-\frac{1}{x}}$ if $x >0$ and $f(x,y)=0$ if $x\leq 0$. Consider the smooth distribution $\cF$ on $\R^2$ generated by the vector fields $X= \partial_x$ and $Y_n = x^{-n}f(x,y)\partial_y$ for all $n \in \N$. then $\cF$ is involutive, but not (locally) finitely generated.
\end{example}

Let $(M,\cD)$ be a smooth distribution. There is a naive way to associate with $\cD$ a distribution in the usual sense. For any $x\in M$, let $ev_x : \cD \to T_x M$ be the linear map given by the evaluation at $x$. Put $D_x$ the image of this map. It is a vector subspace of $T_x M$. The field of vector spaces $\cup_{x \in M}D_x$ is a distribution on $M$ in the usual sense. If the dimension of $D_x$ is locally constant, then $D$ is a smooth distribution in the usual sense, that is, a vector subbundle of $TM$, and $\cD$ is a projective $C^\infty(M)$-module. The converse is also true \cite{AZ5}.   

There is a more natural way to associate with $\cD$ a field of vector spaces.
For any $x \in M$, consider the $C^{\infty}(M)$-submodule $I_x\cD$ of $\cX_c(M)$, where $I_x = \{f \in C^{\infty}(M) : f(x)=0\}$. Since $\cD$ is locally finitely generated, the quotient $\cD_x = \cD/I_x\cD$ is a finite dimensional vector space. We call it the \textit{fiber} of $(M,\cD)$ at $x$. For any $X\in \cD$, we will denote by $[X]_x$ the corresponding class in $\cD_x$. 

The fibers $\cD_{x}$ provide a way to find a minimal set of local generators of $\cD$. More precisely (\cf \cite[Prop. 1.5]{AS1}), if $X_1,\ldots,X_\ell \in \cD$ are such that their restrictions to some open subset $U$ generate $\cD\left|_U\right.$, then, for any $x\in U$, we have $\dim \cD_x\leq \ell$. On the other hand, if $X_1,\ldots,X_k \in \cD$ are such that their images in $\cD_x$ give a basis of $\cD_x$, then there exists a neighborhood $U$ of $x$ such that the restrictions of $X_1, \ldots, X_k$ to $U$ generate $\cD\left|_U\right.$.

\begin{example}
Let us consider the distribution $\cD$ on $M=\R^2$ generated by the vector fields $X_1 = \partial_x$ and $X_2=x\partial_y$  (Grushin plane). It is easy to see that, for $(x,y)\in \R^2$ with $x \neq 0$, we have $\cD_{(x,y)}=D_{(x,y)}=\R^2$ and, for a point on the $y$-axis, we have $\cD_{(0,y)} = \R^2$ and $D_{(0,y)} = \R$ for $y \in \R$. 
\end{example}

As shown in \cite[Prop. 1.5]{AS1}, the dimension map $\dim_{\cD} : M \to \N, x \mapsto \dim \cD_x$ is upper semicontinuous, and the dimension map $\dim_D : M \to \N, x \mapsto \dim D_x$ is lower semicontinuous. The set of continuity of $\dim_D$ is 
\[
\mathcal{C} = \{x \in M : ev_x : \cD_x \to D_x \text{ is bijective } \}.
\] 
It is an open and dense subset of $M$. The restriction $\cD\left|_{\mathcal{C}}\right.$ is a projective $C^{\infty}(\mathcal{C})$-submodule of $\cX(\mathcal{C})$, whence it is the module of sections of a vector subbundle $D$ of $T\mathcal{C}$.

We recall from \cite{AK1} a notion of local presentation for a smooth distribution $\cD$, which provides a geometric reformulation of the algebraic assumption on $\cD$ to be locally finitely generated. 

First, recall that an anchored vector bundle over $M$ is a vector bundle $E \to M$ endowed with a morphism of vector bundles $\rho : E \to TM$ over the identity diffeomorphism of $M$. An anchored vector bundle $\rho : E \to TM$ over $M$ induces a morphism of $C^{\infty}(M)$-modules $\Gamma_c E \to \cX_c(M)$, which we also denote $\rho$ by abuse of notation. Then the module $\cD_E = \rho(\Gamma_c E)$ is locally finitely generated: Indeed, if $\sigma_1,\ldots,\sigma_k$ is a frame of $E$ over an open $U \subset M$, the module $\cD_E\left|_U\right.$ is generated by the restrictions to $U$ of the vector fields $X_i = \rho(\sigma_i)$, $1 \leq i \leq k$. Whence $(M,\cD_E)$ is a smooth distribution.

Conversely, let $(M,\cD)$ be a distribution. Take any $X_1,\ldots,X_k\in \cD$ such that $X_1\left|_U\right.,\ldots,X_k\left|_U\right.$ generate $\cD\left|_U\right.$ in some open subset $U$. Consider an anchored vector bundle $\rho_U : E_U \to TU$ over $U$, where $E_U$ is the trivial bundle $U \times \R^k$ and $\rho_U : E_U \to TU$ is the map $\rho_U(y,\lambda_1,\ldots,\lambda_k)=\lambda_1 X_1(y) + \ldots + \lambda_k X_k(y)$. At the level of sections, we obtain the map $\rho_U : \Gamma_c E_U=C^{\infty}_c(U)^k \to \cX_c(U)$ given by 
\[
\rho_U(f_1,\ldots,f_k)=f_1 \cdot X_1\left|_U\right. + \ldots + f_k \cdot X_k\left|_U\right.
\]
such that $\rho_U(\Gamma_c E_U)=\cD\left|_U\right.$. This motivates the following definition.

\begin{definition}\label{dfn:presentation}
Let $(M,\cD)$ be a distribution and $U$ an open subset of $M$. 
\begin{enumerate}
\item A \emph{local presentation} of $(M,\cD)$ over $U$ is an anchored vector bundle $\rho_U : E_U \to TU$ over $U$, such that 
\[ 
\rho_U(\Gamma_c E_U) = \cD\left|_U\right..
\]
Once the distribution $(M,\cD)$ is fixed, a local presentation as such is denoted $(E_U,\rho_U)$. 
\item Let $W$ be an open subset of $U$. A \emph{morphism of local presentations} from $(E_U,\rho_U)$ to $(E_W,\rho_W)$ is a vector bundles morphism $\psi : E_U \left|_W\right. \to E_W$ (over the identity) such that $\rho_W \circ \psi = \rho_U$. A morphism of local presentations from $(E_W,\rho_W)$ to $(E_U,\rho_U)$ is a morphism of vector bundles $\phi : E_W \to E_U$ over the inclusion $\iota : W \hookrightarrow U$ such that $\rho_U \circ \phi = \rho_W$.
\end{enumerate}
\end{definition}

Given a local presentation $(E_U,\rho_U)$, fix a point $x$ in $U$. Recall from the Serre-Swan theorem that the fiber $(E_U)_x$ is the quotient of the $C^{\infty}(U)$-module $\Gamma_c E_U$ by the $C^{\infty}(U)$-submodule $I_x \Gamma_c E_U$ (\cf \cite{AZ5}). Since $\rho(I_x\Gamma_c E_U)\subseteq I_x\cD\left|_U\right.$, we obtain a linear epimorphism 
\[
\widehat{\rho}_{U,x} : (E_U)_x \to \cD\left|_U\right./I_x\cD\left|_U\right.=\cD_x.
\] 
Whence the dimension of the fiber $\cD_x$ at any $x \in U$ is bounded above by the rank of $E_U$. Composing $\widehat{\rho}_{U,x}$ with the evaluation map we recover the restriction of $\rho_U$ to the fiber $(E_U)_x$. This is a linear epimorphism $\rho_{U,x} : (E_U)_x \to D_x$. Whence the following diagram commutes:
\[
\xymatrix{
(E_U)_x \ar@{->>}[r]^{\widehat{\rho}_{U,x}} \ar@{->>}[rd]_{\rho_{U,x}} & \cD_x \ar@{->>}[d]^{ev_x} \\
& D_x
}
\]

Recall that, for any local presentation $(E_U,\rho_U)$ of $(M,\cD)$ over an open subset $U$ and for any $x\in U$, we have $\operatorname{rank} E_U\geq \dim \cD_x$. A local presentation $(E_U,\rho_U)$ is said to be \emph{minimal} at $x$ if $\operatorname{rank} E_U= \dim \cD_x$ (or, equivalently, if the linear epimorphism $\widehat{\rho}_{U,x} : (E_U)_x \to \cD_x$ is an isomorphism). As mentioned above, a minimal local presentation at $x \in M$ can be constructed, starting from a basis of $\cD_x$. Minimal local presentations play an essential role in several proofs.   

One can define the following compatibility relation between different local presentations. 

\begin{definition}\label{dfn:equiv}
Let $(M,\cD)$ be a distribution and $U, V$ open subsets of $M$ such that $U \cap V \neq \emptyset$. Two local presentations $(E_U,\rho_U)$ and $(E_V,\rho_V)$ are called \textit{equivalent at a point $x \in U \cap V$}, if there exist a local presentation $(E_W,\rho_W)$ over an open neighbourhood  $W$ of $x$ such that $W \subset U \cap V$ and morphisms of local presentations $\phi_{W,U} : (E_W,\rho_W) \to (E_U,\rho_U)$ and $\phi_{W,V} : (E_W,\rho_W) \to (E_V,\rho_V)$ such that $\rho_U\left|_W\right. \circ \phi_{W,U}=\rho_W = \rho_V\left|_W\right. \circ \phi_{W,V}$. 
\end{definition}

One can show \cite[\S 1.3]{AK1} that the compatibility relation introduced in Definition \ref{dfn:equiv} is an equivalence relation. The following proposition \cite[Prop. 1.15]{AK1} suggests that the set of all local representations equipped with this relation can be considered as a (maximal) atlas for the distribution.

\begin{prop}\label{prop:equivminlocpr}
Suppose that $U, V$ are open subsets of $M$ such that $U \cap V \neq \emptyset$. Then any local presentations $(E_U,\rho_U)$ and $(E_V,\rho_V)$ are equivalent at every $x \in U\cap V$.
\end{prop}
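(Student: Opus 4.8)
The plan is to reduce the claim to the existence, near any chosen point $x \in U \cap V$, of a \emph{common refinement} of the two local presentations by a third local presentation, and then to invoke Proposition-style local rigidity. More precisely, I would first observe that it suffices to produce a local presentation $(E_W,\rho_W)$ on some open neighbourhood $W \subset U \cap V$ of $x$ together with morphisms of local presentations $\phi_{W,U}\colon (E_W,\rho_W)\to(E_U,\rho_U)$ and $\phi_{W,V}\colon (E_W,\rho_W)\to(E_V,\rho_V)$; by the very definition of equivalence at $x$ (Definition \ref{dfn:equiv}) this is exactly what is required. The natural choice for $(E_W,\rho_W)$ is a \emph{minimal} local presentation at $x$: starting from a basis $[X_1]_x,\dots,[X_k]_x$ of the fiber $\cD_x$ (with $k=\dim\cD_x$), the vector fields $X_1,\dots,X_k$ generate $\cD|_W$ on a small enough neighbourhood $W$ of $x$, as recalled after Definition \ref{dfn:presentation}, and the associated trivial anchored bundle $E_W = W\times\R^k$ with $\rho_W(y,\lambda)=\sum_i\lambda_i X_i(y)$ is such a presentation.

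Next I would construct the morphism $\phi_{W,U}\colon (E_W,\rho_W)\to(E_U,\rho_U)$. Shrinking $W$ if necessary so that $E_U|_W$ is trivial, pick a frame $\sigma_1^U,\dots,\sigma_m^U$ of $E_U$ over $W$ and set $Y_j=\rho_U(\sigma_j^U)$. Since $\rho_W(\Gamma_c E_W)=\cD|_W=\rho_U|_W(\Gamma_c(E_U|_W))$, each generator $X_i$ of $\cD|_W$ can be written (possibly after further shrinking of $W$) as $X_i|_W=\sum_j a_{ji}\,Y_j|_W$ with $a_{ji}\in C^\infty(W)$; defining $\phi_{W,U}$ on the frame by $\phi_{W,U}(e_i)=\sum_j a_{ji}\,\sigma_j^U$ and extending $C^\infty(W)$-linearly gives a vector bundle morphism over $\id_W$ with $\rho_U|_W\circ\phi_{W,U}=\rho_W$. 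The subtle point is that ``$X_i\in\cD|_W$ implies $X_i=\sum_j a_{ji}Y_j$ on $W$'' requires the $Y_j$ to generate $\cD|_W$ as a module — which holds because $(E_U,\rho_U)$ is a presentation over $U\supset W$, so $\rho_U|_W(\Gamma_c(E_U|_W))=\cD|_W$ and a standard partition-of-unity/localization argument turns a global-on-$U$ expression into a local-on-$W$ one with smooth coefficients. Symmetrically one builds $\phi_{W,V}$. Both satisfy the compatibility identities demanded in Definition \ref{dfn:equiv}, namely $\rho_U|_W\circ\phi_{W,U}=\rho_W=\rho_V|_W\circ\phi_{W,V}$, so $(E_U,\rho_U)$ and $(E_V,\rho_V)$ are equivalent at $x$; since $x\in U\cap V$ was arbitrary, the proof is complete.

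The main obstacle I anticipate is purely the bookkeeping of how much one is allowed to shrink $W$ and why the coefficient functions $a_{ji}$ can be taken \emph{smooth} on $W$ rather than merely existing pointwise: this is where local finite generation of $\cD$ (Definition \ref{dfn:distr}) and the equivalence, via Serre–Swan, between local presentations and finitely generated submodules are used essentially. A clean way to package this is to note that the map $\Gamma_c(E_U|_W)\to\cD|_W$ is surjective and $\Gamma_c(E_W)$ is free on the $\sigma_i^W$, so the assignment $\sigma_i^W\mapsto$ (a chosen preimage in $\Gamma_c(E_U|_W)$) defines the module map inducing $\phi_{W,U}$ at the level of sections, and one checks this is $C^\infty(W)$-linear and localizes to a genuine vector bundle morphism. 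If one prefers to avoid even this, one can instead cite the atlas-type results of \cite[\S 1.3]{AK1} to reduce directly to comparing each of $(E_U,\rho_U)$ and $(E_V,\rho_V)$ with the canonical minimal presentation at $x$ and then composing; but giving the construction of $\phi_{W,U}$ explicitly, as above, makes the statement self-contained.
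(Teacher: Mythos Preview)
The paper does not give a proof of this proposition; it is stated as a citation of \cite[Prop.~1.15]{AK1}, in keeping with the survey nature of Sections~\ref{s:multi}--\ref{sec:horLapl}. Your argument is correct and is precisely the expected one: take a minimal local presentation $(E_W,\rho_W)$ at $x$ as the common refinement, and lift its free frame through the surjections $\rho_U|_W$ and $\rho_V|_W$ to produce the two morphisms. The only point that needs care---and you identify it---is that writing $X_i|_W=\sum_j a_{ji}Y_j|_W$ with smooth $a_{ji}$ requires shrinking $W$: one multiplies by a bump function $f\in C^\infty_c(W)$ equal to $1$ near $x$, uses surjectivity of $\rho_U|_W$ on $\Gamma_c(E_U|_W)$ to get compactly supported coefficients, and then restricts to $\{f=1\}$. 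With that detail in place the construction is complete, and this is, up to notation, the argument one finds in \cite[\S 1.3]{AK1}.
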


\section{The horizontal differential}\label{sec:hordif}
Given a smooth distribution $(M,\cD)$, denote $\cD^*$ the disjoint union of vector spaces $\bigsqcup_{x\in M}\cD^*_x$. Recall that in \cite[Prop. 2.10]{AS2}, it was shown that $\cD^{\ast}$ is a locally compact space. Its topology is the smallest topology which makes the following maps continuous:
\begin{itemize}
\item $p : \cD^{\ast} \to M$ is the projection $p(x,\xi)=x$.
\item For every $X \in \cD$ the map $q_X : \cD^{\ast} \to \R$ with $q_X(x,\xi)=\langle \xi, [X]_x \rangle$.
\end{itemize}

First, with the help of local presentations, we make sense of the smooth sections of this family of vector spaces. 

 
\begin{definition}\label{dfn:smoothdual}
Let $\omega^{\ast}$ be a map $M \ni x \mapsto \omega^{\ast}(x) \in \cD^{\ast}_x$. We say that $\omega^{\ast}$ is a \emph{smooth} section of $\cD^{\ast}$ iff for every $x \in M$ there is a local presentation $(E_U,\rho_U)$ defined in a neighborhood of $x$ such that the section $\omega^{\ast}_U$ of the bundle $E_U^{\ast}$ (a \emph{local realization} of $\omega^{\ast}$) defined by the commutative diagram 
\[
\xymatrix{
 & \omega^{\ast}_U(y) \in E^{\ast}_{U,y} \\ 
y\in U  \ar[r]_{\omega^{\ast}} \ar[ru]|-{\omega^{\ast}_U} & \omega^{\ast}(y) \in \cD^{\ast}_y \ar[u]_{\widehat{\rho}_{U,y}^{\ast}} 
}
\]
or, equivalently, by $\omega^{\ast}_U(y)=\widehat{\rho}^{\ast}_{U,y} \circ \omega^{\ast}(y)$ for all $y \in U$ is smooth on $U$:

Note that, since $\widehat{\rho}_{U,x}$ is surjective, its dual map $\widehat{\rho}^{\ast}_{U,x}$ is injective.
\end{definition}

We denote the set of smooth sections of $\cD^{\ast}$ by $C^{\infty}(M,\cD^{\ast})$ and its subset consisting of sections with compact support by $C^{\infty}_c(M,\cD^{\ast})$. Regarding the definition of the $C^{\infty}(M)$-module structure for $C^{\infty}_c(M,\cD^{\ast})$, it is $(f\cdot\omega^{\ast})(y) = f(y)\cdot \omega^{\ast}(y)$. Note that if $\omega^{\ast}_U$ is a local realization of $\omega^{\ast}$ then $f \left|_U\right. \cdot \omega^{\ast}_U$ is a local realization of $f\cdot \omega^{\ast}$.
 
Smooth sections as such can also be characterized \cite[Prop. 3.4]{AK1} in a coordinate-free fashion:

\begin{prop}\label{not:smoothdual}
Let $\omega^{\ast}$ be a map $M \ni x \mapsto \omega^{\ast}(x) \in \cD^{\ast}_x$. If $\omega^*\in C^\infty(M,\cD^*)$, then the function $M \ni x \mapsto \langle \omega^{\ast}(x), [X]_x \rangle$ is smooth on $M$ for any $X\in\cD$. Conversely, if the function $M \ni x \mapsto \langle \omega^{\ast}(x), [X]_x \rangle$ is smooth on $M$ for any $X\in\cD$ and $(E_V,\rho_V)$ is an arbitrary local presentation of $\cD$, then the local realization $\omega^{\ast}_V$ of $\omega^{\ast}$ is smooth on $V$. 
\end{prop}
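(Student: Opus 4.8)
The plan is to establish the two implications separately, each time reducing to a local computation near a fixed point and using repeatedly that a local presentation $(E_U,\rho_U)$ satisfies $\rho_U(\Gamma_c E_U)=\cD\left|_U\right.$, that $\widehat{\rho}_{U,y}$ is characterized by $\widehat{\rho}_{U,y}(\sigma(y))=[\rho_U(\sigma)]_y$ for $\sigma\in\Gamma_c E_U$, and the duality $\langle\widehat{\rho}^{\ast}_{U,y}\xi,s\rangle=\langle\xi,\widehat{\rho}_{U,y}s\rangle$. Cut-off functions are used to pass between elements of $\cD$ and sections of the locally defined bundles $E_U$.

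For the forward implication, fix $X\in\cD$ and $x_0\in M$. By Definition \ref{dfn:smoothdual}, choose a local presentation $(E_U,\rho_U)$ near $x_0$ whose local realization $\omega^{\ast}_U$ is smooth on $U$, and a cut-off $\chi\in C^{\infty}_c(U)$ with $\chi\equiv 1$ on an open neighbourhood $U_0$ of $x_0$. Since $\chi X\left|_U\right.\in\cD\left|_U\right.=\rho_U(\Gamma_c E_U)$, write $\chi X\left|_U\right.=\rho_U(\sigma)$ with $\sigma\in\Gamma_c E_U$. For $y\in U_0$ the vector fields $X$ and $\chi X$ have the same germ at $y$, so $[X]_y=[\chi X]_y=\widehat{\rho}_{U,y}(\sigma(y))$, hence $\langle\omega^{\ast}(y),[X]_y\rangle=\langle\widehat{\rho}^{\ast}_{U,y}\omega^{\ast}(y),\sigma(y)\rangle=\langle\omega^{\ast}_U(y),\sigma(y)\rangle$, which is smooth on $U_0$ as the pairing of smooth sections of $E_U^{\ast}$ and $E_U$. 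Since $x_0$ is arbitrary, $x\mapsto\langle\omega^{\ast}(x),[X]_x\rangle$ is smooth on $M$.

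For the converse, let $(E_V,\rho_V)$ be any local presentation and $y_0\in V$; it suffices to show $\omega^{\ast}_V$ is smooth near $y_0$. Pick a local frame $\sigma_1,\dots,\sigma_k$ of $E_V$ over a neighbourhood $W\subseteq V$ of $y_0$ and a cut-off $\chi\in C^{\infty}_c(W)$ with $\chi\equiv 1$ on an open neighbourhood $W_0$ of $y_0$; then it is enough that each $y\mapsto\langle\omega^{\ast}_V(y),\sigma_i(y)\rangle$ be smooth near $y_0$. Now $\rho_V(\chi\sigma_i)\in\cD\left|_V\right.$ is a finite $C^{\infty}_c(V)$-combination of restrictions to $V$ of elements of $\cD$; extending it by zero gives, since $\cD$ is a $C^{\infty}(M)$-module, an element $Y_i\in\cD$ that agrees with $\rho_V(\sigma_i)$ on $W_0$. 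For $y\in W_0$ we get $[Y_i]_y=[\rho_V(\chi\sigma_i)]_y=\widehat{\rho}_{V,y}(\sigma_i(y))$, whence $\langle\omega^{\ast}_V(y),\sigma_i(y)\rangle=\langle\omega^{\ast}(y),\widehat{\rho}_{V,y}(\sigma_i(y))\rangle=\langle\omega^{\ast}(y),[Y_i]_y\rangle$, which is smooth near $y_0$ by hypothesis. Letting $i$ and $y_0$ vary gives smoothness of $\omega^{\ast}_V$ on $V$; in particular this yields the a priori stronger statement that the local realization with respect to \emph{any} presentation is smooth.

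The one point requiring genuine care — the rest being locality and routine manipulation with cut-offs and duality — is the identification, used tacitly in the two computations above, between the fibre $\cD_y=\cD/I_y\cD$ and the fibre $\cD\left|_U\right./I_y\cD\left|_U\right.$ of the restricted module: both $[X]_y$ for $X\in\cD$ and $[\rho_U(\sigma)]_y$ for $\rho_U(\sigma)\in\cD\left|_U\right.$ must be read as the same vector of $\cD_y$. This is exactly the identification already incorporated into the definition of $\widehat{\rho}_{U,x}$ recalled above; it holds because the fibre at $y$ depends only on the germ of the module at $y$ (using that $\cD$ is locally finitely generated), so invoking it here is legitimate.
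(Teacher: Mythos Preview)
Your argument is correct. The paper does not supply a proof here---it quotes the result from \cite[Prop.~3.4]{AK1}---so there is no in-text proof to compare against; that said, the route you take (pairing against a local frame, cut-offs to pass between $\cD$ and $\cD|_U$, and the duality $\langle\widehat{\rho}^{\ast}_{U,y}\xi,s\rangle=\langle\xi,\widehat{\rho}_{U,y}s\rangle$) is the natural one and is essentially what the cited reference does. Your final paragraph correctly isolates the only non-formal point, the identification $\cD_y\cong(\cD|_U)_y$, and justifies it.
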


As a consequence, we get a well-defined map 
$ev^{\ast} : \Omega^1_c(M) \to C^{\infty}_c(M,\cD^{\ast})$
induced by the evaluation maps $ev_x : \cD \to T_x M$, $x\in M$. For any $\omega\in \Omega^1_c(M)$ and $x\in M$, its image $ev^{\ast}\omega(x)\in \cD^{\ast}_x$ is given by
\[
\langle ev^{\ast}\omega(x), [X]_x\rangle = \langle \omega(x), X(x)\rangle,\quad  X\in \cD.
\]
By Proposition \ref{not:smoothdual}, it is clear that the function $M \ni x \mapsto ev^{\ast}\omega(x) \in \cD^{\ast}_x$ is smooth. 

We are now ready to give the definition of the horizontal differential of a distribution.

\begin{definition}\label{dfn:hordif}
Let $(M,\cD)$ be a smooth distribution.  
\begin{enumerate}
\item The  \textit{horizontal differential} is the operator  
$d_{\cD} : C^{\infty}_c(M) \to C^{\infty}_c(M,\cD^{\ast})$ defined as $d_{\cD} = ev^{\ast}\circ d$, where $d : C^{\infty}_c(M) \to \Omega^1_c(M)$ is the de Rham differential.
\item Given a local presentation $(E_U,\rho_U)$, put $d_{E^{\ast}_U} : C^{\infty}_c(U) \to C^{\infty}_c(U,E^{\ast}_U)$ the operator defined as the composition of the de Rham differential $d : C^{\infty}_c(U) \to \Omega^1_c(U)$ with the map $\rho_U^{\ast} : \Omega^1_c(U) \to C^{\infty}_c(U,E_U^{\ast})$. We call $d_{E^{\ast}_U}$ a \emph{local presentation} of the horizontal differential $d_{\cD}$.
\end{enumerate}
\end{definition}

Note that the terminology ``local presentation'' for the operator $d_{E^{\ast}_U}$ is justified by the following commutative diagram:
\[
\xymatrix{
C^{\infty}_c(U) \ar[r]^{d} & \Omega^1_c(U)  \ar[rd]_{ev^{\ast}} \ar[rr]^{\rho^{\ast}_U} & & C^{\infty}_c(U,E^{\ast}_U) \\
 & & C^{\infty}_c(U,\cD^{\ast}) \ar[ru]_{\widehat{\rho}^{\ast}_U} &
}
\]
Thus, we have
\[
d_{E^{\ast}_U}=\widehat{\rho}^{\ast}_U\circ d_{\cD}. 
\]

\section{Riemannian metric on a distribution}\label{sec:dfnRiem}

Here we will give the definition of Riemannian structure on a distribution, extending the classical definition of Riemannian structure on a vector bundle. So a Riemannian metric on a distribution $(M,\cD)$ needs to be defined on a family of pointwise linearizations of $\cD$, and must be smooth in some sense. The fibers $\cD_x = \cD / I_x \cD$ play the role of these linearizations, and the local presentations of $\cD$ can be used to make sense of this smoothness. But first we need the following, quite classical, facts:

\begin{enumerate}
\item Suppose that $(E, \langle \cdot, \cdot  \rangle_{E})$ and $(F, \langle \cdot, \cdot  \rangle_{F})$ are two (finite dimensional) Euclidean vector spaces and $A: E\to F$ is a linear epimorphism. Then we have the induced linear map $\bar A: E/\ker A \to F$, which is an isomorphism. 

The inner product $\langle \cdot, \cdot  \rangle_{E}$ induces an inner product $\langle \cdot, \cdot  \rangle_{E/\ker A}$ on $E/\ker A$, using the isomorphism $E/\ker A\cong (\ker A)^\bot$. 

We say that $A$ is a {\it Riemannian submersion}, if $\bar A$ preserves inner products:
\[
\langle \bar A u, \bar Av  \rangle_{F}= \langle u, v  \rangle_{E/\ker A}, \quad u,v \in E/\ker A. 
\] 

\item If $A: E\to F$ is a linear epimorphism and $\langle \cdot, \cdot  \rangle_{E}$ is an inner product on $E$, then there exists a unique inner product $\langle \cdot, \cdot  \rangle_F$ on $F$ such that $A : (E, \langle \cdot, \cdot  \rangle_{E})\to (F, \langle \cdot, \cdot  \rangle_{F})$ is a Riemannian submersion. This follows immediately from the fact that the induced map $\bar A: E/\ker A \to F$ is an isomorphism. The corresponding norm is given by 
\[
\|u\|_{F}= \|\bar A^{-1} u\|_{E/\ker A}=\inf \{\|w\|_E : w\in E, Aw=u\}, \quad u\in F. 
\]

\item If $(E, \langle \cdot, \cdot  \rangle_{E})$ and $(F, \langle \cdot, \cdot  \rangle_{F})$ are two Euclidean vector spaces and $A: E\to F$ is a linear epimorphism, then the adjoint $A^*: F \to E$ is a linear monomorphism. One can check that $A$ is a Riemannian submersion if and only if $A^*$ is an isometry, that is, preserves inner products:
\[
\langle A^* u, A^*v  \rangle_E= \langle u, v  \rangle_{F}, \quad \text{ for all } u,v \in F. 
\] 
\end{enumerate}

\begin{definition}\label{dfn:metric}
A \emph{Riemannian metric} on $(M,\cD)$ is a family 
\[
\langle\ ,\ \rangle_{\cD}=\{\langle\cdot, \cdot\rangle_x,  x\in M\}
\] 
of Euclidean inner products $\langle\cdot, \cdot\rangle_x$ on $\cD_x$, which is smooth in the following sense. For every $x \in M$, there exist an open neighborhood $U$ of $x$, a local presentation $\rho_U : E_U \to TM$ of $(M,\cD)$, and a smooth family of inner products $\{\langle\cdot, \cdot\rangle_{(E_U)_y}, y\in U\}$ in the fibers of $E_U$, such that, for any $y\in U$, the linear epimorphism $\hat{\rho}_{U,y} : (E_U)_y \to \cD_y$ is a Riemannian submersion. 

A local presentation $(E_U,\rho_U)$ as above is called a local presentation of the Riemannian metric $\langle\ ,\ \rangle_{\cD}$ over $U$. 
\end{definition}

\begin{theorem}[\cite{AK1}]
Let $(M,\cD)$ be an arbitrary smooth distribution. There exists a Riemannian structure for $(M,\cD)$.
\end{theorem}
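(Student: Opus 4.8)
The plan is to construct the Riemannian metric by patching together local constructions using a partition of unity, exploiting the infimum formula for the norm in a Riemannian submersion. First I would cover $M$ by open sets $U_\alpha$ on each of which a local presentation $(E_{U_\alpha},\rho_{U_\alpha})$ exists — this is guaranteed because $\cD$ is locally finitely generated, so each point has a neighborhood $U$ with a presentation of the form $U\times\R^k$ with $\rho$ given by $k$ local generators. On each $E_{U_\alpha}$, which is a genuine vector bundle, choose an arbitrary smooth bundle metric $\langle\cdot,\cdot\rangle_{E_{U_\alpha}}$ (these exist by the standard partition-of-unity argument for vector bundles). By fact (2) in the excerpt, for each $y\in U_\alpha$ the epimorphism $\hat\rho_{U_\alpha,y}\colon (E_{U_\alpha})_y\to\cD_y$ induces a unique inner product $\langle\cdot,\cdot\rangle^\alpha_y$ on $\cD_y$ making $\hat\rho_{U_\alpha,y}$ a Riemannian submersion, with norm given by the infimum formula $\|u\|^\alpha_y=\inf\{\|w\|_{E_{U_\alpha}}:\hat\rho_{U_\alpha,y}w=u\}$. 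So on each $U_\alpha$ we already have a candidate Riemannian metric; the issue is globalizing.

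The key point is that a convex combination of these inner products is again an inner product, and — crucially — the corresponding \emph{squared norm} is again realized as the squared norm coming from a Riemannian submersion out of a larger bundle. Concretely, take a partition of unity $\{\chi_\alpha\}$ subordinate to $\{U_\alpha\}$ and set, for $u\in\cD_x$,
\[
\langle u,v\rangle_x=\sum_\alpha \chi_\alpha(x)\,\langle u,v\rangle^\alpha_x .
\]
This is manifestly a family of inner products on the fibers $\cD_x$. To verify the smoothness condition in Definition \ref{dfn:metric}, fix $x_0\in M$, let $A=\{\alpha: x_0\in\operatorname{supp}\chi_\alpha\}$ (a finite set), and work over a neighborhood $U$ of $x_0$ contained in $\bigcap_{\alpha\in A}U_\alpha$. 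Form the direct-sum anchored bundle $E_U=\bigoplus_{\alpha\in A}E_{U_\alpha}|_U$ with anchor $\rho_U=\sum_{\alpha\in A}\rho_{U_\alpha}|_U$, which is again a local presentation of $\cD$ over $U$ since each summand already surjects onto $\cD|_U$. Equip $E_U$ with the fiber metric that on the $\alpha$-summand is $\chi_\alpha^{-1}\langle\cdot,\cdot\rangle_{E_{U_\alpha}}$ (rescaled by the positive smooth functions $\chi_\alpha$ near $x_0$, where they are bounded below — here one should shrink $U$ so that only the finitely many $\alpha\in A$ with $\chi_\alpha(x_0)>0$ contribute, and handle $\chi_\alpha(x_0)=0$ summands by omitting them, which is legitimate since the remaining ones still span). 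Then the infimum formula for this direct-sum metric computes exactly $\sum_\alpha\chi_\alpha(x)\|\cdot\|^\alpha_x{}^2$, i.e.\ $\hat\rho_{U,y}$ is a Riemannian submersion for the patched metric. Since the chosen fiber metric on $E_U$ is smooth on $U$, Definition \ref{dfn:metric} is satisfied at $x_0$.

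The main obstacle I anticipate is precisely the bookkeeping in the last step: the naive rescaling $\chi_\alpha^{-1}$ blows up where $\chi_\alpha$ vanishes, so one cannot literally take a global direct sum over all $\alpha$ with that weighting. The clean fix is the local one described above — near $x_0$ restrict to the finite set of indices whose bump functions are strictly positive at $x_0$ (shrinking the neighborhood accordingly), for which the weights are smooth and bounded away from zero. One must then check two compatibilities: that dropping an index $\alpha$ with $\chi_\alpha(x_0)=0$ from the direct sum does not change the resulting inner product on $\cD_y$ for $y$ near $x_0$ (true because $\chi_\alpha$ vanishes identically near $x_0$ after shrinking, so its contribution to the sum is zero), and that the direct-sum bundle with the weighted metric is genuinely a local presentation of the Riemannian metric in the sense of Definition \ref{dfn:metric} (which is the infimum computation, a routine exercise with quadratic forms). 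Alternatively, one can avoid direct sums altogether by invoking Proposition \ref{prop:equivminlocpr}: push all the $U_\alpha$-metrics onto a single fixed minimal local presentation near $x_0$ via the equivalence morphisms, average there, and use fact (2) once more; I would present whichever version is shorter, but the direct-sum approach has the advantage of being completely explicit and not requiring minimality.
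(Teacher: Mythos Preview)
The paper itself does not prove this theorem --- it merely states it and cites \cite{AK1} --- so there is no proof here to compare against. I can therefore only comment on the correctness of your proposal.

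Your overall strategy (local presentations, bundle metrics, partition of unity) is the natural one, but the central step contains a genuine error: the claim that the infimum formula for the direct sum $\bigoplus_\alpha (E_{U_\alpha}, \chi_\alpha^{-1} g_\alpha)$ yields the convex combination $\sum_\alpha\chi_\alpha\|\cdot\|_\alpha^2$ on $\cD_y$ is false. A one-line counterexample: take $E_1=E_2=\cD_y=\R$ with $\hat\rho_1=\hat\rho_2=\id$, $g_1(w)=w^2$, $g_2(w)=4w^2$, $\chi_1=\chi_2=\tfrac12$. Then $\|u\|_1^2=u^2$, $\|u\|_2^2=4u^2$, so your formula gives $\tfrac52 u^2$; but minimizing $2w_1^2+8w_2^2$ over $w_1+w_2=u$ gives $\tfrac85 u^2$. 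What the weighted direct sum actually produces is the metric whose \emph{dual} on $\cD^*_y$ equals $\sum_\alpha\chi_\alpha\, h_\alpha^{-1}$ --- the harmonic, not arithmetic, combination. This is immediate from fact (3) in Section~\ref{sec:dfnRiem}: $\hat\rho^*$ must be an isometry, and for the direct sum one has $\hat\rho^*\xi=(\hat\rho_\alpha^*\xi)_\alpha$, so it is the dual squared norms that add.

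The clean fix is to glue on the dual side and absorb the weights into the anchor rather than the metric. Take functions $\chi_\alpha$ with $\sum_\alpha\chi_\alpha^2=1$, define $\langle\xi,\eta\rangle_{\cD^*_y}:=\sum_\alpha\chi_\alpha(y)^2\, g_\alpha^*(\hat\rho_\alpha^*\xi,\hat\rho_\alpha^*\eta)$, and dualize to get the metric on $\cD_y$. Near any $x_0$ this is realized by the direct sum $\tilde E_V=\bigoplus_{\alpha\in A} E_{U_\alpha}|_V$ with the \emph{unweighted} metric $\bigoplus g_\alpha$ and the modified anchor $\tilde\rho_V=\sum_{\alpha\in A}\chi_\alpha\,\rho_{U_\alpha}$; this is still a local presentation once $V$ is shrunk so that some $\chi_{\alpha_0}>0$ throughout $V$, and now $\hat{\tilde\rho}_V^*\xi=(\chi_\alpha\hat\rho_\alpha^*\xi)_\alpha$ is tautologically an isometry. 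No inverse weights appear, so nothing blows up. Your fallback via minimal presentations and Proposition~\ref{prop:equivminlocpr} needs similar care: pulling back $g_\alpha$ along a morphism $\phi_\alpha\colon E_W\to E_{U_\alpha}$ can be degenerate, so ``averaging there'' is not automatic either.
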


\section{The horizontal Laplacian of a distribution}\label{sec:horLapl}
A naive approach to introducing an adjoint for the horizontal differential $d_{\cD}= ev^{\ast}\circ d$ of a distribution $(M,\cD)$ would be to use a Riemannian metric on $M$ in order to make sense of the adjoint of the usual de Rham differential $d^{\ast}$. But such a metric would have to be somehow compatible with the Riemannian metric of the distribution $(M,\cD)$, and this reduces considerably the range of applicability of our constructions.

Instead, we will show in this section that an adjoint can be constructed only with the data of the Riemannian metric on the distribution and the smooth density of $M$, for which no compatibility is required. This is possible thanks to the local presentations of our Riemannian metric.

Let us fix a Riemannian metric $\langle\ ,\ \rangle_{\cD}=\{\langle\cdot, \cdot\rangle_x, x\in M\}$ on the distribution $(M,\cD)$, as in Definition \ref{dfn:metric}, and a positive smooth density $\mu$ on $M$.
Then one can define a family $\langle\ ,\ \rangle_{\cD^*}=\{\langle\cdot, \cdot\rangle_{x}, x\in M\}$ of inner products on $\cD^*_x$ and the pointwise inner product of two elements $\omega,\omega^\prime \in C^\infty(M,\cD^*)$ as a function $\langle \omega, \omega^\prime \rangle_{\cD^*}$ on $M$ given by
\[
\langle \omega, \omega^\prime \rangle_{\cD^*}(x)=\langle \omega(x), \omega^\prime(x) \rangle_x,\quad x\in M.
\] 
We can also define an inner product on $C^\infty_c(M,\cD^*)$ by
\[
(\omega,\omega^\prime)_{L^2(M,\cD^*,\mu)}=\int_M \langle \omega, \omega^\prime \rangle_{\cD^*}(x) d\mu(x),\quad \omega,\omega^\prime\in C^\infty(M,\cD^*).
\]
An important observation is that, for any $\omega,\omega^\prime\in C^\infty(M,\cD^*)$, one can show $\langle \omega, \omega^\prime \rangle_{\cD^*}\in C^\infty(M)$, so the integral is well-defined.

Since $\cD^*$ is not a vector bundle, the existence of the adjoint 
\[
d_{\cD}^{\ast} : C^{\infty}_c(M,\cD^{\ast}) \to C^{\infty}_c(M)
\] 
of the operator $d_{\cD} : C^{\infty}_c(M) \to C^{\infty}_c(M,\cD^{\ast})$ is not immediate. Such an adjoint arises from the adjoints of local presentations of $d_\cD$. More precisely, let $U$ be an open subset of $M$ and $(E_U,\rho_U)$ be a local presentation of the Riemannian metric on $(M,\cD)$. First we can define an inner product on $C^{\infty}_c(U,E_U^{\ast})$ by 
\[
(\omega^{\ast}_{1},\omega^{\ast}_{2})_{L^2(U,E^{\ast}_U,\mu)} = \int_U \langle \omega^{\ast}_{1}(y),\omega^{\ast}_{2}(y) \rangle_{E^\ast_{U,x}}d\mu(y).
\] 
Since $d_{E^{\ast}_U}$ is a first order differential operator, acting in sections of vector bundles, there exists its adjoint $d_{E^{\ast}_U}^{\ast} : C^{\infty}_c(U,E^{\ast}_U) \to C^{\infty}_c(U)$, which is a first order differential operator, satisfying  
\[
(d_{E^{\ast}_U}^{\ast}\omega^{\ast},\alpha )_{L^2(U,\mu)} = (\omega^{\ast},d_{E^{\ast}_U}\alpha)_{L^2(U,E^{\ast}_U,\mu)}
\]
for all $\omega^{\ast} \in C^{\infty}_c(U,E^{\ast}_U)$ and $\alpha \in C^{\infty}_c(U)$.

For $\omega^{\ast} \in C^{\infty}_c(U,\cD^{\ast})$, we define $d^{\ast}_{\cD,U} \omega^{\ast}\in C^\infty_c(U)$ by
\[
d^{\ast}_{\cD,U} \omega^{\ast} (y) = d_{E^{\ast}_U}^{\ast} \omega^{\ast}_U(y), \quad y \in U,
\]
where $\omega^{\ast}_U \in C^{\infty}_c(U,E^{\ast}_U)$ is the local realization of $\omega^{\ast}$. 

One can show that the resulting operator $d^{\ast}_{\cD,U} : C^{\infty}_c(U,\cD^{\ast}) \to C^{\infty}_c(U)$ is well-defined and is the adjoint of $d_{\cD}\left|_{U}\right.$. Moreover, these locally defined adjoints $d^{\ast}_{\cD,U}$ agree with each other on intersections, giving rise to a global operator $d^{\ast}_{\cD} : C^{\infty}_c(M,\cD^{\ast}) \to C^{\infty}_c(M)$, which is the adjoint of $d_{\cD}$.  

Now we are able to define the horizontal Laplacian of a distribution.

\begin{definition}\label{dfn:horlapl}
Let $(M,\cD)$ be a smooth distribution. Choose a Riemannian metric on $\cD$ and a positive smooth density $\mu$ on $M$. The operator $\Delta_{\cD} = d^{\ast}_{\cD} \circ d_{\cD} : C^{\infty}_c(M) \to C^{\infty}_c(M)$ is called the \emph{horizontal Laplacian} of the distribution $(M,\cD)$.
\end{definition}

\begin{remark}
The operator $\Delta_{\cD}$ can be described using the associated quadratic form (an analogue of the Dirichlet form): 
\[
(\Delta_{\cD}u,u)=\int_M \|d_{\cD}u(x)\|_{\cD^*_x}^{2}d\mu(x),\quad u \in C^{\infty}_c(M).
\]
\end{remark}

\begin{remark} Locally, the horizontal Laplacian admits a ``sum of squares'' description: Given a local presentation $(E_U,\rho_U)$, choose an orthonormal frame $(\omega_1,\ldots,\omega_d)$ of $E_U$. Then the vector fields $\rho_{U}(\omega_1), \ldots, \rho_{U}(\omega_d)$ generate $\cD\left|_{U}\right.$ and the restriction $\Delta_{\cD,U}$ of $\Delta_{\cD}$ to $U$ is given by
\begin{equation}\label{e:sum_of_sq}
\Delta_{\cD,U} = \sum_{i=1}^d \rho_{U}(\omega_i)^{\ast}\rho_{U}(\omega_i).
\end{equation} 
In particular, we see that $\Delta_{\cD}$ is a second order differential operator.
\end{remark}

\begin{remark}In the case $\cD$ is a foliation, $\Delta_{\cD}$ is a longitudinally elliptic operator. We refer the reader to \cite{umn-survey} for a survey of longitudinally elliptic operators on regular foliations and to \cite{Andr14,AS2} for the case of singular foliations.
\end{remark}

From now on, we restrict to the case where $M$ is a compact manifold. 

\begin{theorem} \label{t:ss}
The horizontal Laplacian $\Delta_{\cD}$, as an unbounded operator on the Hilbert space $L^2(M,\mu)$, with domain $C^{\infty}(M)$, is essentially self-adjoint.
\end{theorem}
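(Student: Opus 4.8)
The plan is to exhibit $\Delta_{\cD}$ as a nonnegative symmetric operator and then invoke a standard criterion for essential self-adjointness. First I would note that by construction $\Delta_{\cD} = d^{\ast}_{\cD}\circ d_{\cD}$ is symmetric and nonnegative on the dense domain $C^{\infty}(M) = C^{\infty}_c(M)$ (using compactness of $M$), since $(\Delta_{\cD}u,u)_{L^2(M,\mu)} = (d_{\cD}u, d_{\cD}u)_{L^2(M,\cD^{\ast},\mu)} = \int_M \|d_{\cD}u(x)\|_{\cD^{\ast}_x}^2\,d\mu(x) \geq 0$. Hence its deficiency indices are equal and it admits self-adjoint extensions; the issue is to show the closure is already self-adjoint, equivalently that $\ker(\Delta_{\cD}^{\ast} \pm i\,\mathrm{Id}) = 0$, or (since $\Delta_{\cD}\geq 0$) that $\mathrm{Ran}(\Delta_{\cD} + \mathrm{Id})$ is dense, equivalently $\ker(\Delta_{\cD}^{\ast}+\mathrm{Id}) = 0$.

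The key input is the sum-of-squares description \eqref{e:sum_of_sq}: locally $\Delta_{\cD,U} = \sum_{i=1}^d \rho_U(\omega_i)^{\ast}\rho_U(\omega_i)$, so $\Delta_{\cD}$ is, up to lower order terms, $-\sum_i Y_i^2$ for smooth vector fields $Y_i = \rho_U(\omega_i)$. I would use a partition of unity subordinate to a finite cover of the compact manifold $M$ by local-presentation domains $U_\alpha$ to assemble global control: writing $1 = \sum_\alpha \chi_\alpha^2$ with $\chi_\alpha \in C^\infty_c(U_\alpha)$, one has for $u$ in the domain of the closure $\bar\Delta_{\cD}$ the identity $\sum_\alpha \| d_{\cD}(\chi_\alpha u)\|^2 = \| d_{\cD} u\|^2 + (\text{commutator terms bounded by } \|u\|^2)$, which gives an a priori estimate $\sum_{\alpha,i}\|Y_{\alpha,i}u\|^2 \leq C\big( (\bar\Delta_{\cD}u,u) + \|u\|^2\big)$. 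This ``first-order energy estimate'' is the standard route: it shows the graph norm of $\bar\Delta_{\cD}$ controls the $H^1$-type norm associated to the $Y_i$, and conversely $\bar\Delta_{\cD} + \mathrm{Id}$ has a coercive quadratic form. One then argues: if $v \in L^2(M,\mu)$ satisfies $(\Delta_{\cD} + \mathrm{Id})^{\ast} v = 0$, i.e. $(v, (\Delta_{\cD}+\mathrm{Id})u) = 0$ for all $u \in C^\infty(M)$, a Friedrichs-mollifier / local regularity argument using the coercivity pushes $v$ into the form domain, whence $0 = (v,(\Delta_{\cD}+\mathrm{Id})v) = \|d_{\cD}v\|^2 + \|v\|^2 \geq \|v\|^2$, forcing $v = 0$.

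Concretely I would organize the steps as: (i) reduce to showing $\ker(\bar\Delta_{\cD}^{\ast} + \mathrm{Id}) = 0$; (ii) fix a finite cover $\{U_\alpha\}$ by domains carrying local presentations $(E_{U_\alpha},\rho_{U_\alpha})$ of the Riemannian metric and a subordinate quadratic partition of unity; (iii) establish the localization/commutator identity and the resulting energy estimate; (iv) run a Friedrichs-type mollification argument locally in each $U_\alpha$ — transported to $\R^n$ in coordinates where each $Y_{\alpha,i}$ is a vector field with smooth coefficients — to show any $v$ in $\ker(\bar\Delta_{\cD}^{\ast}+\mathrm{Id})$ actually lies in the domain of the closure, using that $[Y_{\alpha,i}, J_\varepsilon]$ (mollifier commutators) are uniformly bounded on $L^2$; (v) conclude by the nonnegativity computation above.

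The main obstacle I anticipate is step (iv): making the mollifier-commutator argument rigorous in the singular setting, where $\cD$ has non-constant rank and $\cD^{\ast}$ is merely a locally compact space rather than a vector bundle. One must be careful that the local realizations $\omega^{\ast}_{U_\alpha}$ of a section $\omega^{\ast}$ in the different presentations glue correctly, so that mollifying a section of $\cD^{\ast}$ really reduces, via the injections $\widehat{\rho}^{\ast}_{U_\alpha,y}$, to mollifying sections of the honest bundles $E^{\ast}_{U_\alpha}$; the morphisms of local presentations from Definition~\ref{dfn:equiv} and Proposition~\ref{prop:equivminlocpr} are exactly what guarantees this compatibility on overlaps. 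Once the problem is localized to each $E_{U_\alpha}^{\ast}$, the operator $d^{\ast}_{E^{\ast}_{U_\alpha}} d_{E^{\ast}_{U_\alpha}}$ is an ordinary (second-order, possibly non-elliptic) differential operator on a vector bundle with the sum-of-squares structure, and the classical Friedrichs argument for essential self-adjointness of symmetric sum-of-squares operators on compact manifolds applies verbatim; the content is entirely in arranging the reduction.
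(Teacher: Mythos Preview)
Your approach is genuinely different from the paper's, and while it can be made to work, the sketch has a soft spot that is not where you think it is.

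The paper does not use mollifiers or energy estimates at all. It invokes Chernoff's criterion \cite{Chernoff}: one forms the first-order symmetric operator $D=\begin{pmatrix} 0 & d_{\cD}^{\ast}\\ d_{\cD} & 0\end{pmatrix}$ on $L^2(M,\mu)\oplus L^2(M,\cD^{\ast},\mu)$ and observes that the associated wave equation $\partial_t u = iDu$ is a linear symmetric first-order hyperbolic system with finite propagation speed (compactness of $M$ gives a global bound). Chernoff's theorem then yields essential self-adjointness of every power of $D$, in particular of $D^2$, whose top-left block is $\Delta_{\cD}$. This bypasses all commutator bookkeeping.

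In your route, the obstacle you flag in step (iv)---compatibility of local realizations across overlapping presentations---is not the real issue; that is handled by a partition of unity exactly as you describe, and Proposition~\ref{prop:equivminlocpr} makes it routine. The genuine difficulty is analytic: the Friedrichs lemma gives uniform $L^2$-boundedness of $[Y_{\alpha,i},J_\varepsilon]$, but $[\Delta_{\cD},J_\varepsilon]=\sum_i\bigl(X_i^{\ast}[X_i,J_\varepsilon]+[X_i^{\ast},J_\varepsilon]X_i\bigr)$ contains the term $X_i^{\ast}[X_i,J_\varepsilon]$, and applying the unbounded $X_i^{\ast}$ to a merely $L^2$-bounded family is not controlled by first-order commutators alone. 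To close the argument you need either a double-commutator estimate (uniform $L^2$-boundedness of $[X_j,[X_i,J_\varepsilon]]$) or a two-step bootstrap: first use the identity $0=(v,(\Delta_{\cD}+1)J_\varepsilon^2 v)$ together with Cauchy--Schwarz to extract a uniform bound on $\sum_i\|X_i J_\varepsilon v\|^2$, placing $v$ in the form domain; only then can the second-order commutator be tamed. Both fixes are standard for sum-of-squares operators on compact manifolds, but neither follows from the single sentence ``$[Y_{\alpha,i},J_\varepsilon]$ are uniformly bounded on $L^2$.'' The Chernoff argument buys you exactly the avoidance of this second-order commutator analysis.
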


Theorem~\ref{t:ss} is proved in \cite{YK1,AK1}, using a well-known result by Chernoff \cite{Chernoff} based on some facts from theory of linear symmetric first order hyperbolic systems, in particular, using in an essential way the fact of finite propagation speed of wave solutions of such equations. This follows from the compactness of $M$.

\section{Longitudinal hypoellipticity}
Under rather weak assumptions, one can associate with an arbitrary distribution $\cD$ a singular foliation $\cF=\cU(\cD)$, which includes $\cD$,  so that the horizontal Laplacian $\Delta_\cD$ is a longitudinal differential operator with respect to $\cF$. By definition, $\cD$ satisfies a kind of bracket generating condition with respect to $\cF$ and, therefore, induces a structure of sub-Riemannian manifold on each leaf of $\cF$. Therefore, one may expect that $\Delta_\cD$ is longitudinally hypoelliptic with respect $\cF$ that is justified by the results given in this section.

Recall that the vector space $\cX_c(M)$ carries two natural structures: the structure of $C^\infty(M)$-module given by the pointwise multiplication and the structure of Lie algebra given by the Lie bracket of vector fields. They satisfy some compatibility conditions. In short, one can say that $(C^\infty(M), \cX_c(M))$ is a Lie-Rinehart algebra in the sense of \cite{Rinehart}. 

Let $\cD$ be a $C^{\infty}(M)$-submodule of $\cX_c(M)$. The \emph{Lie-Rinehart subalgebra of $(C^\infty(M), \cX_c(M))$} associated to $\cD$ is the minimal submodule $\cU(\cD)$ of $\cX_c(M)$ which contains $\cD$ and is involutive, namely it satisfies $[X,Y] \in \cU(\cD)$ for every $X, Y \in \cU(\cD)$. Specifically, $\cU(\cD)$ is the $C^{\infty}(M)$-submodule of $\cX_{c}(M)$ generated by the elements of $\cD$ and their iterated Lie brackets $[X_1,\ldots,[X_{k-1},X_k]]$ with $X_i \in \cD$, $i = 1,\ldots,k$, for every $k \in \N$.

Observe that the associative algebra over $C^{\infty}(M)$ of differential operators on $M$ generated by $\cD$ coincides with the corresponding  associative algebra over $C^{\infty}(M)$ of differential operators on $M$ generated by $\cU(\cD)$, that is, in the case when $\cU(\cD)$ is a foliation, with the algebra of longitudinal differential operators for $\cU(\cD)$. 

As can be seen from the following example, even if $\cD$ is locally finitely generated, the module $\cU(\cD)$, in general, may not be locally finitely generated. 

\begin{example}
Let $f \in C^{\infty}(\R^2)$ be defined by $f(x,y) = e^{-\frac{1}{x}}$ if $x >0$ and $f(x,y)=0$ if $x\leq 0$. Consider the smooth distribution $\cD$ on $\R^2$, which is the $C^{\infty}_c(\R^2)$-module generated by the vector fields $X = \partial_x$ and $Y=f\partial_y$. Note that $\cD$ is not involutive (indeed, $[X,Y]=-x^{-2}X$ and the function $g(x,y)=x^{-2}$ is obviously not in $C^{\infty}(\R^2)$) and $\cU(\cD)$ coincides with the distribution described in Example \ref{ex:nonfoliation}. So $\cU(\cD)$ is not (locally) finitely generated and, therefore, not a singular foliation.
\end{example}

In the sequel, we will always consider the case when $\cD$ is a smooth distribution and $\cF=\cU(\cD)$ is locally finitely generated. Then $\cF$ is a singular foliation. 

In \cite{AS1,AS2}, the first author and Skandalis extended to singular foliations the basic results of elliptic theory for longitudinal differential operators developed for regular foliations by Connes in \cite{Connes79}. In particular, classes of longitudinal pseudodifferential operators and associated scale of Sobolev spaces have been constructed. In \cite{YK1}, the second author used the methods developed in \cite{AS1,AS2} as well as the methods of the study of hypoelliptic H\"ormander operators of sum of squares type to prove some basic properties of the horizontal Laplacian for an arbitrary (constant rank) smooth distribution. In \cite{AK1}, these results were extended to the case of arbitrary generalized smooth distribution.

\begin{theorem}[\cite{YK1,AK1}]\label{t:Hs-hypo}
Suppose that $M$ is a compact manifold, $\cD$ is a smooth distribution on $M$ such that $\cF=\cU(\cD)$ is a singular foliation and $\Delta_\cD$ is the horizontal Laplacian for $\cD$ associated with some choice of a Riemannian metric on $\cD$ and a positive smooth density $\mu$ on $M$. 

There exists $\epsilon>0$ such that, for any $s\in \mathbb R$, we have 
\[
\|u\|_{s+\epsilon}^2 \leq
C_s\left(\|\Delta_\cD u\|_s^2+\|u\|_s^2\right), \quad u\in C^\infty(M),
\]
where $C_s>0$ is some constant and $\|\cdot\|_s$ denotes a norm in the longitudinal Sobolev space $H^s(\mathcal F)$.
\end{theorem}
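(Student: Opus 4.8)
The plan is to deduce the subelliptic estimate from Hörmander's sum-of-squares theory transported to the longitudinal setting of the foliation $\cF=\cU(\cD)$. The key structural facts I would use are: (i) the local "sum of squares" description \eqref{e:sum_of_sq}, which says that on a local presentation $(E_U,\rho_U)$ with orthonormal frame $(\omega_1,\dots,\omega_d)$ one has $\Delta_{\cD,U}=\sum_{i=1}^d X_i^*X_i$ with $X_i=\rho_U(\omega_i)$ a local generating family for $\cD|_U$; and (ii) the bracket-generating hypothesis baked into the assumption $\cF=\cU(\cD)$, namely that iterated brackets of the $X_i$ span $\cF|_U$. Since $\Delta_\cD$ and its local pieces are longitudinal differential operators for $\cF$, the scale $H^s(\cF)$ and the longitudinal pseudodifferential calculus of \cite{AS1,AS2} are the right framework, and the estimate is a statement internal to that calculus.

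\textbf{Step 1: Localize.} Cover the compact $M$ by finitely many charts $U_\alpha$ each carrying a local presentation of the Riemannian metric, an orthonormal frame, hence generators $X^\alpha_1,\dots,X^\alpha_{d_\alpha}$ with $\Delta_{\cD,U_\alpha}=\sum_i (X^\alpha_i)^*X^\alpha_i$. Take a subordinate partition of unity $\{\varphi_\alpha\}$. It suffices to prove the estimate for $\varphi_\alpha u$ with constants uniform in $\alpha$, since the commutators $[\Delta_\cD,\varphi_\alpha]$ are first-order longitudinal operators and are absorbed by interpolation ($\|[\Delta_\cD,\varphi_\alpha]u\|_s \lesssim \|u\|_{s+1}\leq \delta\|u\|_{s+\epsilon}+C_\delta\|u\|_s$ once $\epsilon\le 1$; more carefully one uses the standard Hörmander bootstrap that handles the lower-order error terms). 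Here I would invoke the foliated Rellich/interpolation inequalities for the spaces $H^s(\cF)$ from \cite{AS2}.

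\textbf{Step 2: The local subelliptic estimate.} On each $U_\alpha$, I need: there is $\epsilon_\alpha>0$ with $\|v\|_{s+\epsilon_\alpha}^2\le C(\|\sum_i (X^\alpha_i)^*X^\alpha_i v\|_s^2+\|v\|_s^2)$ for $v$ supported in $U_\alpha$, in the $H^\bullet(\cF)$ scale. For $s=0$ this is exactly Hörmander's theorem applied leafwise: the leaf through any point is a genuine manifold, the $X^\alpha_i$ restrict to bracket-generating vector fields on it, and $\sum (X^\alpha_i)^*X^\alpha_i$ differs from $\sum (X^\alpha_i)^2$ by a first-order term, so the classical estimate $\|v\|_{\epsilon}^2\lesssim \mathrm{Re}(\sum X_i^*X_i v,v)+\|v\|_0^2 = \sum\|X_iv\|_0^2+\ldots$ gives it with $\epsilon=2^{-r}$, $r$ the maximal bracket length ("Hörmander step") needed on $U_\alpha$; compactness makes $r$ uniformly bounded. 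The point is that this argument is uniform transversally, because the bracket relations are smooth in the transverse parameter, so it globalizes to the foliation setting — this is precisely the kind of transfer carried out in \cite{YK1} for the constant-rank case and in \cite{AS2} for the calculus. For general $s$ one commutes a longitudinal pseudodifferential operator of order $s$ past $\Delta_{\cD,U_\alpha}$, picking up lower-order errors controlled as in Step 1; the formalism of \cite{AS2} guarantees these commutators lie in the calculus.

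\textbf{Main obstacle.} The genuine difficulty — and the reason the construction of a parametrix announced in the introduction is needed — is that Hörmander's theorem is a statement about operators on a single manifold, whereas $\cF$ may be a \emph{singular} foliation: the leaves vary in dimension, the $X^\alpha_i$ need not be linearly independent, and "uniform transverse parameter" has to be made precise through bisubmersions rather than honest fibrations. So the hard part is not the $s=0$ elliptic-style estimate but verifying that the subelliptic gain $\epsilon$ and the constant $C_s$ can be chosen independently of the leaf and that the error terms genuinely live in the longitudinal calculus of \cite{AS1,AS2} — in other words, that Rothschild–Stein's lifting/parametrix construction \cite{Rothschild-Stein} can be run within that calculus, yielding a longitudinal parametrix $Q$ for $\Delta_\cD+1$ of order $-\epsilon$ with $Q(\Delta_\cD+1)=\mathrm{Id}+R$, $R$ smoothing along $\cF$. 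Granting that parametrix, the estimate is immediate: $\|u\|_{s+\epsilon}=\|Q(\Delta_\cD+1)u-Ru\|_{s+\epsilon}\lesssim \|(\Delta_\cD+1)u\|_s+\|u\|_s\lesssim \|\Delta_\cD u\|_s+\|u\|_s$.
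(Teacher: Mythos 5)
Your outline correctly identifies the standard strategy, but it does not close: everything is ultimately deferred to ``granting that parametrix,'' i.e.\ a longitudinal Rothschild--Stein parametrix $Q$ of order $-\epsilon$ for $\Delta_\cD+1$ with remainder smoothing along $\cF$, and that grant is precisely the hard content. No such object is available here: within the longitudinal calculus of \cite{AS2} the operator $\Delta_\cD$ is \emph{not} longitudinally elliptic (its characteristic set along $\cF$ is nontrivial unless $\cD$ already generates $\cF$ at order one), so there is no parametrix of fractional order $-\epsilon$ in that calculus to invoke. The Rothschild--Stein construction that does appear in this paper (Section~\ref{s:param}, Theorems~\ref{t:param1} and \ref{t:param-local}) is a different and weaker statement: it is carried out only under Assumption~\ref{assumption} that $\cF$ is \emph{regular}, it produces $Q,R,S\in C^*(\cF)$, i.e.\ a parametrix modulo compact morphisms of the Hilbert module, not modulo leafwise smoothing operators, and it serves the multiplier Theorem~\ref{t:multi2}, not the subelliptic estimate. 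Theorem~\ref{t:Hs-hypo}, by contrast, is stated for a possibly singular foliation $\cF=\cU(\cD)$, where your parametrix route is not established anywhere you can cite, and a $C^*$-level remainder would in any case not yield the estimate on the whole Sobolev scale $H^s(\cF)$.

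The second gap is in Step~2: the claim that for $s=0$ the estimate ``is exactly H\"ormander's theorem applied leafwise'' does not give what is needed. The spaces $H^s(\cF)$ are defined on $M$ through the longitudinal pseudodifferential calculus (via bisubmersions), not leaf by leaf; applying H\"ormander on each leaf produces constants and gains $\epsilon$ that a priori depend on the leaf and controls leafwise norms, not $\|u\|_{H^\epsilon(\cF)}$ for $u\in C^\infty(M)$. This is why the proofs in \cite{YK1,AK1}, which this paper quotes rather than reproves, do not localize to leaves at all: they rerun the H\"ormander--Kohn commutator scheme directly inside the calculus of \cite{AS1,AS2} (estimating $\sum_j\|X_ju\|^2$ from the quadratic form, gaining $2^{-k}$ per bracket through commutators with longitudinal operators of fractional order, then conjugating by order-$s$ operators for general $s$), with the uniform finite bracket length coming from local finite generation of $\cU(\cD)$ plus compactness of $M$. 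So the correct repair of your argument is not a parametrix but carrying out that commutator bootstrap in the longitudinal calculus, which is where the genuine work lies.
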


As a consequence, we immediately obtain the following result on longitudinal hypoellipticity. 

\begin{theorem}[\cite{YK1,AK1}]\label{t:hypo}
Under the assumptions of Theorem~\ref{t:Hs-hypo}, if $
u\in H^{-\infty}(\mathcal F):=\bigcup_{t\in \mathbb R}H^t(\mathcal F)$
such that $\Delta_\cD u\in H^s(\mathcal F)$ for some $s\in \mathbb R$, then $u \in H^{s+\varepsilon}(\mathcal F)$. 
\end{theorem}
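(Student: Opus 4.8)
The plan is to obtain Theorem~\ref{t:hypo} from the subelliptic a priori estimate of Theorem~\ref{t:Hs-hypo} by the standard bootstrap argument, carried out inside the longitudinal pseudodifferential calculus of $\cF=\cU(\cD)$ constructed in \cite{AS1,AS2}. That calculus supplies the scale $\{H^\sigma(\cF)\}_{\sigma\in\R}$, with $H^{-\infty}(\cF)=\bigcup_\sigma H^\sigma(\cF)$ and $H^{+\infty}(\cF)=\bigcap_\sigma H^\sigma(\cF)$, and it contains longitudinally smoothing operators, i.e. operators of order $-\infty$ mapping $H^{-\infty}(\cF)$ into $H^{+\infty}(\cF)$; recall also that $\Delta_\cD$ is a longitudinal differential operator of order $2$ for $\cF$. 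The first step is to fix a Friedrichs-type mollifier: a family $(J_\delta)_{0<\delta\le1}$ of longitudinal operators that are of order $-\infty$ for each $\delta$, are uniformly bounded as a family in $\Psi^0(\cF)$, and converge strongly to the identity on every $H^\sigma(\cF)$ as $\delta\to0$.

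Given $u\in H^t(\cF)$, I would first reduce to $t\le s$ (replacing $t$ by $\min(t,s)$; if $t\ge s+\epsilon$ there is nothing to prove), and then apply Theorem~\ref{t:Hs-hypo} to $J_\delta u\in H^{+\infty}(\cF)$ (the estimate, stated for $C^\infty(M)$, extends to $H^{+\infty}(\cF)$ by density) at an admissible index $\tau\le s$:
\[
\|J_\delta u\|_{\tau+\epsilon}^2\le C_\tau\bigl(\|\Delta_\cD J_\delta u\|_\tau^2+\|J_\delta u\|_\tau^2\bigr).
\]
Writing $\Delta_\cD J_\delta u=J_\delta\Delta_\cD u+[\Delta_\cD,J_\delta]u$, the terms $\|J_\delta\Delta_\cD u\|_\tau$ (using $\Delta_\cD u\in H^s(\cF)$ and $\tau\le s$) and $\|J_\delta u\|_\tau$ are bounded uniformly in $\delta$ by the uniform $\Psi^0$-bound on $J_\delta$. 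Granting a uniform bound on the commutator term as well, $\{J_\delta u\}$ is bounded in $H^{\tau+\epsilon}(\cF)$; extracting a weakly convergent subnet and identifying its limit with $u$ via the strong convergence $J_\delta u\to u$ gives $u\in H^{\tau+\epsilon}(\cF)$. One then iterates, each step raising the regularity of $u$ by a fixed positive amount, until $u\in H^s(\cF)$, and a final step gives $u\in H^{s+\epsilon}(\cF)$.

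The main obstacle is precisely the commutator $[\Delta_\cD,J_\delta]u$. Since $\Delta_\cD$ has order $2$ and $J_\delta$ only uniform order $0$, this commutator is a priori of uniform order $1$, and the crude bound $\|[\Delta_\cD,J_\delta]u\|_\tau\le C\|u\|_{\tau+1}$ forces $\tau\le t-1$, so that one step gains only $\epsilon-1$ derivatives — useless in the sub-Riemannian range $\epsilon\le1$ that is our main interest. The content of the argument is therefore a refined, uniform-in-$\delta$ commutator estimate of the form $\|[\Delta_\cD,J_\delta]u\|_\tau\le C\bigl(\|\Delta_\cD u\|_\tau+\|u\|_\tau\bigr)$, which allows the index $\tau$ in the step above to be taken as large as $\min(t,s)$ and hence each step to gain the full $\epsilon$. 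One obtains it by expanding $[\Delta_\cD,J_\delta]$ through the local sum-of-squares presentation \eqref{e:sum_of_sq}: each commutator of a first-order generator $\rho_U(\omega_i)$ with $J_\delta$ costs only order $0$, so that after an integration by parts the commutator is controlled by the same quantities $\|\Delta_\cD u\|_\tau+\|u\|_\tau$ that govern the subelliptic estimate, the remaining error being absorbed into its left-hand side — this is the mechanism used by Rothschild and Stein \cite{Rothschild-Stein}, adapted to the foliation setting in \cite{YK1,AK1}. (Once the longitudinal parametrix $Q$ for $\Delta_\cD$ built later in this paper is at hand, with $Q\Delta_\cD=\Id+R$, $R$ longitudinally smoothing and $Q$ mapping $H^\sigma(\cF)$ to $H^{\sigma+\epsilon}(\cF)$, the conclusion is immediate: $u=Q\Delta_\cD u-Ru\in H^{s+\epsilon}(\cF)+H^{+\infty}(\cF)=H^{s+\epsilon}(\cF)$.) The possible presence of a kernel of $\Delta_\cD$ — for instance the constants — causes no difficulty, since the a priori estimate already carries the lower-order term $\|u\|_\tau$.
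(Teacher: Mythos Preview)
The paper gives no proof of Theorem~\ref{t:hypo} beyond the sentence ``As a consequence, we immediately obtain the following result on longitudinal hypoellipticity'', deferring entirely to \cite{YK1,AK1}. Your proposal supplies exactly the standard mechanism behind that sentence: the Kohn--Rothschild--Stein bootstrap via Friedrichs mollifiers in the longitudinal calculus, and it is correct in outline.

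Two small remarks. First, the density step ``the estimate, stated for $C^\infty(M)$, extends to $H^{+\infty}(\cF)$'' deserves a word of care: what you actually need, and what holds, is that for each fixed $\tau$ the inequality extends from $C^\infty(M)$ to $H^{\tau+2}(\cF)$ by density of $C^\infty(M)$ in that space; since $J_\delta u\in H^{+\infty}(\cF)\subset H^{\tau+2}(\cF)$ this suffices. Second, the refined commutator bound you display, $\|[\Delta_\cD,J_\delta]u\|_\tau\le C(\|\Delta_\cD u\|_\tau+\|u\|_\tau)$, is not literally what one proves: the terms $X_i^*[X_i,J_\delta]u$ still carry one longitudinal derivative and cannot be bounded by $\|u\|_\tau$ alone. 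What one actually shows (and what you correctly indicate in the next clause) is that, after rewriting and an integration by parts, the commutator contribution is controlled by a small multiple of the left-hand side $\|J_\delta u\|_{\tau+\epsilon}$ plus $\|J_\delta\Delta_\cD u\|_\tau+\|u\|_\tau$, and is then absorbed. With that understood, your argument is the standard one and matches what the cited references do.

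Your parenthetical observation is also apt: once the longitudinal parametrix of Section~\ref{s:param} is in hand, $u=Q\Delta_\cD u-Ru$ gives the conclusion in one line, and this is arguably the cleaner route within the present paper.
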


These results, in particular, allow us to give another proof of essential self-adjointness of the operator $\Delta_\cD$ and also prove that, for any function  $\varphi$ from the Schwartz space $\mathcal S(\mathbb R)$, the operator $\varphi(\Delta_\cD)$ is leafwise smoothing with respect to the foliation $\mathcal F$, that is, it extends to a bounded operator from $H^s(\mathcal F)$ to $H^t(\mathcal F)$ for any $s,t\in \mathbb R$.

\section{The horizontal Laplacian as a multiplier}\label{s:multiplier}
As above, we will assume that $M$ is a compact manifold, $\cD$ is a smooth distribution on $M$ such that $\cF=\cU(\cD)$ is a singular foliation and $\Delta_\cD$ is the horizontal Laplacian for $\cD$ associated with some choice of a Riemannian metric on $\cD$ and a positive smooth density $\mu$ on $M$. 
\begin{assumption}\label{assumption}
From now on, we will assume that $\cF$ is a regular foliation. This means that $\mathcal F$ coincides with the subspace of smooth vector fields on $M$, tangent to leaves of some smooth foliation, which will be also denoted by $\mathcal F$. This assumption is justified\footnote{Using bisubmersions (\cf \cite{AS1}), as well as the methods developed in \cite{AS2} (where transversality plays an important role), it is not hard to lift Assumption \ref{assumption}. However, we do not know useful examples of distributions $\cD$ whose associated foliation $\cF$ is not regular.} in view of  the distributions arising in sub-Riemannian Geometry. In fact, in this case $\cF$ is the $C^{\infty}(M)$-module $\X(M)$ of all vector fields.
\end{assumption}

As mentioned above, $\Delta_\cD$ is a longitudinally hypoelliptic differential operator with respect to $\cF$. One can consider this operator as an unbounded operator on the Hilbert space $L^2(M,\mu)$, with domain $C^{\infty}(M)$. Then it is essentially self-adjoint. An important observation is that the operator $\Delta_D$ viewed as a formal differential expression gives rise to different unbounded operators, acting on essentially different functional spaces. For instance, we can associate with $\Delta_\cD$ a family $\{\Delta_L : L\in M/\mathcal F\}$ of self-adjoint differential operators on the leaves of the foliation $\mathcal F$ (or better on the holonomy coverings of the leaves). It turns out that, in spite of the fact that the operators act on quite different spaces, their spectral properties may be closely related, if we choose the operators $\Delta_L$ in an appropriate way.

Since $\Delta_\cD$ is a longitudinal differential operator for $\cF$, it can be restricted to each leaf $L$ of the foliation $\cF$. If the density $\mu$ is holonomy invariant with respect to $\cF$ and some smooth positive leafwise density $\alpha$, then one can take $\Delta_L$ to be the restriction of $\Delta_\cD$ to $L$. In this case, $\Delta_L$ can be described as the horizontal Laplacian on $L$. Indeed, one can define the restriction $\cD_L$ of the distribution $\cD$ to each leaf $L$ of $\mathcal F$. It is easy to see that the distribution $\cD_L$ is completely non-integrable (bracket-generating). We also have the restriction of the Riemannian structure on $\cD$ to $\cD_L$ and the fixed positive density $\alpha$ on $L$. Then the operator $\Delta_L$ is the horizontal Laplacian for $\cD_L$ associated with these data. In the general case, one should take into account the fact that the transverse part of $\mu$ is not constant along the leaves of $\cF$ (see the modular function $\delta$ and Definition \ref{defn:Rx} below). 

Here methods of operator algebras and noncommutative geometry are very useful. They have been developed for regular foliations by Connes \cite{Connes79,Connes80} (see \cite{survey,umn-survey} for more information) and for singular foliations by the first author and Skandalis \cite{AS1,AS2}. Their applications rely on the key observation that one can define an unbounded multiplier $P_\cD$ on the full $C^*$-algebra $C^*(\mathcal F)$ of the foliation $\cF$ such that both the operator $\Delta_\cD$ and the family $\{\Delta_L : L\in M/\mathcal F\}$ are the images of $P_\cD$ under suitable representations of $C^*(\mathcal F)$. These results were extended in \cite{Vassout} to the case of elliptic differential operators on Lie groupoids and in \cite{AS2} to the case of elliptic differential operators on singular foliations. 

Let us recall some necessary information on noncommutative geometry of regular foliations (for more information and details, see \cite{survey,umn-survey} and references therein).

Let $G$ be the holonomy groupoid of $\cF$. We will denote by $r:G\rightarrow M$ and $s:G\rightarrow M$ its range and source maps. 
Let $\alpha \in C^\infty(M,|T{\mathcal F}|)$ be an arbitrary smooth positive leafwise density on $M$. For any $x\in M$, define a smooth positive density $\nu ^{x}$ on $G^x: =r^{-1}(x)$ as the lift of the density $\alpha $ by the holonomy covering map $s:G^x\to M$. The family $\{\nu^x:x\in M\}$ is a smooth Haar system on $G$. 

The structure of involutive algebra on $C^{\infty}_c(G)$ is given by
\begin{align*}
k_1\ast k_2(\gamma)&=\int_{G^x} k_1(\gamma_1)
k_2(\gamma^{-1}_1\gamma)\,d\nu^x(\gamma_1),\quad \gamma\in G^x,\\
k^*(\gamma)&=\overline{k(\gamma^{-1})}, \quad \gamma\in G.
\end{align*}
We will denote by $C^{\ast}(\cF)$ (resp. $C^{\ast}_r(\cF)$) the full (resp. the reduced) $C^{\ast}$-algebra of the groupoid $G$. They are defined as suitable completions of $C^{\infty}_c(G)$.  

Let $\mu$ be a smooth positive density on $M$. Following \cite{renault,Fack-Skandalis}, we define a natural $\ast$-representation $R_\mu$ of the $C^*$-algebra $C^*(\mathcal F)$ in the Hilbert space $L^2(M,\mu)$. First, we observe that there exists a smooth non-vanishing function $\delta$ on $G$ such that, for any $f\in C_c(G)$, 
\[
\int_M\left(\int_{G^x}\delta(\gamma) f(\gamma^{-1})d\nu^x(\gamma)\right)d\mu(x)=\int_M\left(\int_{G^x}f(\gamma)d\nu^x(\gamma)\right)d\mu(x).
\]
In terminology of \cite{renault,Fack-Skandalis}, the function $\delta$ defines a homomorphism of the groupoid $G$ in the multiplicative group $\mathbb R_+$, and the measure $\mu$ on $M$ is a quasi-invariant measure of module $\delta$. Without loss of generality, we may assume that $\delta(x)=1$ for any $x\in M\subset G$.

\begin{definition}\label{defn:Rx}
For any $k\in C^\infty_c(G)$, the corresponding operator $R_\mu(k) : L^2(M,\mu)\to L^2(M,\mu)$ is defined for $u\in L^2(M,\mu)$ by
\[
R_\mu(k)u(x)=\int_{G^x} k(\gamma)\delta^{-\frac 12}(\gamma) u(s(\gamma))d\nu^x(\gamma), \quad x\in M.
\]
\end{definition}

One can give a local description of the operator $R_\mu(k)$. 
Let $\phi: \Omega \cong U\times T$ and $\phi': \Omega^\prime \cong U^\prime \times T$ be two compatible foliated charts on $M$ and $W(\phi,\phi')\subset G \stackrel{\cong}{\to} U\times U^\prime \times T$ the corresponding coordinate chart on $G$ \cite{Connes79} (see also \cite{survey,umn-survey}). Here $U,U^\prime\subset \R^p$ and $T\subset \R^q$ are open subsets, $p=\dim \cF$, $p+q=n$.
The restrictions $r : W(\phi,\phi') \to \Omega$ and $s : W(\phi,\phi') \to \Omega^\prime$ of the range and source maps to $W(\phi,\phi')\subset G$ are given by
\[
r(x,x^\prime,y)=(x,y),\quad s(x,x^\prime,y)=(x^\prime,y), \quad (x,x^\prime,y)\in U\times U^\prime \times T.
\]
In the charts $\phi$ and $\phi^\prime$, the density $\mu$ is written as $\mu=\mu(x,y)|dx||dy|$ and $\mu=\mu^\prime(x^\prime,y^\prime)|dx^\prime||dy^\prime|$, respectively, and the density $\alpha$ as $\alpha=\alpha(x,y)|dx|$ and $\alpha=\alpha^\prime(x^\prime,y^\prime)|dx^\prime|$, respectively. Then $\delta\in C^\infty(U\times U\times T)$ is given by (see \cite[Proposition VIII.12]{Connes79})
\[
\delta(x,x^\prime,y)=\frac{\mu(x,y)\alpha^\prime(x^\prime,y)}{\mu^\prime(x^\prime,y)\alpha(x,y)}, \quad (x,x^\prime,y)\in U\times U^\prime \times T.
\]
For any $k$ supported in $W$, $k\in C^\infty_c(W)\cong C^\infty_c(U\times U^\prime \times T)$, the operator $R_\mu(k) : C^\infty(\Omega^\prime)\to C^\infty(\Omega)$ has the form
\begin{multline*}
R_\mu(k)u(x,y)=\\ \int k(x,x^\prime,y)\left(\frac{\mu^\prime(x^\prime,y)}{\mu(x,y)}\right)^{1/2}(\alpha(x,y))^{1/2}(\alpha^\prime(x^\prime,y))^{1/2} u(x^\prime,y)dx^\prime.
\end{multline*}

Let $\cD$ be a smooth distribution on $M$ such that $\cF = \cU(\cD)$ is a regular foliation and $\Delta_{\cD}$ be the horizontal Laplacian of the distribution $(M,\cD)$ associated with some choice of a Riemannian metric on $\cD$ and a positive smooth density $\mu$ on $M$. Now we construct a pseudodifferential multiplier $P_{\cD}$ on $C^{\ast}(\cF)$, in the sense of \cite{AS2}, such that the horizontal Laplacian $\Delta_{\cD}$ is the image of $P_{\cD}$ by the representation $R_\mu$.

Let $M=\bigcup_{\alpha=1}^m U_\alpha$ be a finite open covering of $M$ such that, for any $\alpha=1,\ldots,m$, there exist a local representation $(E_{U_\alpha},\rho_{U_\alpha})$ and a local orthonormal frame $(\omega^{(\alpha)}_1,\ldots,\omega^{(\alpha)}_{d_\alpha})$ of $E_{U_\alpha}$. Then (see \eqref{e:sum_of_sq}) the restriction of $\Delta_\cD$ to $U_\alpha$ is written as
\[
\Delta_\cD\left|_{U_\alpha}\right.=\sum_{j=1}^{d_\alpha}(X^{(\alpha)}_j)^* X^{(\alpha)}_j,
\]
where $X^{(\alpha)}_j=\rho_{U_\alpha}(\omega^{(\alpha)}_j)\in \cD\left|_{U_\alpha}\right.$, $j=1,\ldots, d_\alpha$.

Let $\phi_\alpha\in C^\infty(M)$ be a partition of unity subordinate
to the covering, ${\rm supp}\,\phi_\alpha\subset U_\alpha$, and
$\psi_\alpha\in C^\infty(M)$ such that ${\rm
supp}\,\psi_\alpha\subset U_\alpha$,
$\phi_\alpha\psi_\alpha=\phi_\alpha$. Then we have
\[
\Delta_\cD=\sum_{\alpha=1}^m \phi_\alpha (\Delta_\cD\left|_{U_\alpha}\right.)\psi_\alpha=\sum_{\alpha=1}^m\sum_{j=1}^{d_\alpha} \phi_\alpha (X^{(\alpha)}_j)^* X^{(\alpha)}_j\psi_\alpha.
\]
Now, from \cite{AS2} (or \cite{YK1}) we know that each $X\in \cF$ is the image of some multiplier $L_X\in \Psi^1(\cF)$ and, since $R_\mu$ is a $\ast$-presentation, each $X^*\in \cF$ is the image of some multiplier $L_{X^*}\in \Psi^1(\cF)$. Let $s^*X$ (resp. $r^*X$) be the unique vector field on $G$ such that $ds_\gamma(s^* X(\gamma)) =X(s(\gamma))$ and $dr_\gamma(s^* X(\gamma))=0$ (resp. $ds_\gamma(r^* X(\gamma)) =0$ and $dr_\gamma(s^* X(\gamma))=X(r(\gamma))$) for any $\gamma\in G$. It is easy to see that, for $k_1,k_2\in C^{\infty}_c(G)$, we have
\[
s^*X(k_1\ast k_2)=k_1\ast s^*X(k_2), \quad r^*X(k_1\ast k_2)=r^*X(k_1)\ast k_2.
\]
So $s^*X$ is $C^{\infty}_c(G))$-linear with respect to the left multiplication by the elements of $C^{\infty}_c(G)$ and $r^*X$ is $C^{\infty}_c(G))$-linear with respect to the right multiplication by the elements of $C^{\infty}_c(G)$. Both $L_X$  and $L_{X^*}$ are first order differential operators on $G$ of the form 
\[
L_X=r^*X+r^*l_X, \quad L_{X^*}=-r^*X+r^*\tilde l_X,
\]
with some $l_X, \tilde l_X\in C^\infty(M)$ such that
\[
XR_\mu(k)=R_\mu(L_X k), \quad X^*R_\mu(k)=R_\mu(L_{X^*}k),\quad  k\in C^\infty_c(G).
\]
Therefore, if we define $P_\cD$ to be the second order differential operator on $G$ given by
\begin{equation}\label{e:PDelta}
P_\cD=\sum_{\alpha=1}^m\sum_{j=1}^{d_\alpha} r^*\phi_\alpha L_{(X^{(\alpha)}_j)^*} L_{X^{(\alpha)}_j} r^*\psi_\alpha,
\end{equation}
then the operator $\Delta_\cD$ is the image of $P_\cD$ under the representation $R_\mu$, that is, 
\[
\Delta_\cD R_\mu(k)=R_\mu(P_\cD k), \quad  k\in C^\infty_c(G).
\]

Note that the above also works for noncompact manifolds, because in this case all the sums are infinite, but locally finite, and for singular foliations \cite{AK1}.

Recall that a Hilbert module over a $C^*$-algebra $A$ is a right $A$-module $E$ endowed with a positive definite sesquilinear map $\langle \cdot, \cdot\rangle : E\times E\to A$ such that the $\|x\|_E=\|\langle x,x\rangle \|_A^{1/2}$ equips $E$ with a structure of Banach space. An unbounded $A$-linear operator $T$ on a Hilbert module $E$ is called regular, if it is densely defined, its adjoint is densely defined, and its graph admits an orthogonal complement, which means that $A\oplus A = \Gamma \oplus \Gamma^\bot$, where $\Gamma = \{(x, Tx) : x \in \operatorname{Dom} T\}$ is the graph of $T$ and $\Gamma^\bot = \{(T^*y,-y) :  y \in \operatorname{Dom} T^*\}$ is its orthogonal complement with respect to an obvious $A$-valued inner product on $A\oplus A$.

The notion of unbounded regular operator on a Hilbert module over a  $C^*$-algebra was introduced by Baaj in his thesis \cite{Baaj80} (see also  \cite{Baaj-Julg}). Regular operators have many nice properties, similar to the properties of closed densely defined operators in a Hilbert space. In particular, for self-adjoint regular operators, there is a continuous functional calculus (see, for instance, \cite{Lance}). 

Let us consider the $C^*$-algebra $C^*(\mathcal F)$ as a Hilbert module over itself, the right module structure is given by the right multiplication by elements of the algebra and the inner product by $\langle a,b\rangle=a^*b$.  An unbounded regular operator on this module is also called an unbounded multiplier on $C^*(\mathcal F)$.
  
We will consider the operator $P_\cD$ as an unbounded, densely defined operator on the Hilbert module $C^*(\mathcal F)$ with domain $\mathcal A=C^\infty_c(G)$. Using the fact that $R_\mu$ is a $\ast$-representation of $C^*(\mathcal F)$, injective on $C^\infty_c(G)$, one can show that $P_\cD$ is formally self-adjoint, that is, $\langle P_\cD k_1,k_2\rangle=\langle k_1, P_\cD k_2\rangle $ for any $k_1,k_2\in C^\infty_c(G)$.
Since $P_\cD$ is densely defined, its formal self-adjointness immediately implies the existence of the closure $\overline{P_\cD}$. 
 
\begin{theorem}\label{t:multi2} 
The operator $\overline{P_\cD}$ is an unbounded multiplier of $C^*(\mathcal F)$.
\end{theorem}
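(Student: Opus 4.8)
\emph{Proof strategy.}
The plan is to adapt to the present hypoelliptic situation the classical argument by which a longitudinally elliptic operator is shown to define an unbounded multiplier (Connes \cite{Connes79}, Vassout \cite{Vassout}, Baaj \cite{Baaj80}), the elliptic parametrix being replaced by the one of Rothschild--Stein \cite{Rothschild-Stein}. First a reduction. For a densely defined symmetric operator $T$ on a Hilbert module over a $C^*$-algebra and any $z\in\C$ with $\mathrm{Im}\,z\neq0$, symmetry gives $\langle(T+z)\xi,(T+z)\xi\rangle\ge(\mathrm{Im}\,z)^2\langle\xi,\xi\rangle$, so $\overline T+z$ is bounded below and has closed range; consequently $\overline T$ is self-adjoint and regular (i.e.\ an unbounded multiplier) as soon as $\overline T+i\lambda$ and $\overline T-i\lambda$ are surjective for some $\lambda>0$ (replace $T$ by $T/\lambda$ to reduce to the case $\lambda=1$ of Baaj's criterion). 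Since $P_\cD$ is formally self-adjoint on $\mathcal{A}=C^\infty_c(G)$, the theorem reduces to producing, for some $\lambda>0$, an approximate inverse of $P_\cD\pm i\lambda$.

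\emph{Construction of the parametrix.} Fix a coordinate patch $U_\alpha$; there $\Delta_\cD$ is the H\"ormander sum of squares $\sum_j(X^{(\alpha)}_j)^*X^{(\alpha)}_j$, and Assumption~\ref{assumption} together with $\cF=\cU(\cD)$ forces the $X^{(\alpha)}_j$ and their iterated brackets to span $T\cF$ at each point of $U_\alpha$; by compactness of $M$ this bracket-generating condition holds with uniform step and uniform constants. Lifting through $r^*$, the vector fields $r^*X^{(\alpha)}_j$ are tangent to the range fibres $G^x=r^{-1}(x)$---the holonomy coverings of the leaves---so that on each $G^x$ the operator $P_\cD$ is, modulo the cutoffs $r^*\phi_\alpha,\,r^*\psi_\alpha$ and the first order terms $r^*l_X,\,r^*\tilde l_X$, a H\"ormander operator with smoothly and uniformly varying coefficients. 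Running the Rothschild--Stein scheme fibrewise---lifting the $X^{(\alpha)}_j$ to a free nilpotent Lie group of the appropriate step, solving the left-invariant model equation $(\mathcal L_0\pm i\lambda)E=\delta$ by its homogeneous fundamental solution, and transferring back---produces, uniformly in $x$, a parametrix for $P_\cD\pm i\lambda$ in the anisotropic (Rothschild--Stein) pseudodifferential calculus of order $-2$. Assembling these fibrewise parametrices gives a $G$-operator $Q_{\pm\lambda}$ (convolution on $G$ by a kernel of the relevant class, corrected so as to absorb the cutoffs) which, together with its transpose, is a bounded multiplier of $C^*(\cF)$ and satisfies
\[
(\overline{P_\cD}\pm i\lambda)\,Q_{\pm\lambda}=1+R_{\pm\lambda},\qquad Q_{\pm\lambda}\,(\overline{P_\cD}\pm i\lambda)=1+R'_{\pm\lambda},
\]
where $R_{\pm\lambda},R'_{\pm\lambda}$ are longitudinal operators of strictly negative anisotropic order; by the regularity gain underlying Theorem~\ref{t:Hs-hypo} they are again bounded multipliers, and, as in the parameter-dependent versions of the elliptic calculus, the construction can be arranged so that $\|R_{\pm\lambda}\|\to0$ as $\lambda\to\infty$ (alternatively the substitution $Q_{\pm\lambda}\mapsto Q_{\pm\lambda}\sum_{k=0}^{N}(-R_{\pm\lambda})^k$ pushes the remainder into the smoothing ideal, which is contained in $C^*(\cF)$).

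\emph{Conclusion.} Fix $\lambda$ large enough that $\|R_{\pm\lambda}\|<1$. Then $1+R_{\pm\lambda}$ is invertible in $M(C^*(\cF))$, so $\overline{P_\cD}\pm i\lambda$ is surjective on $C^*(\cF)$; being also bounded below, hence injective, it is boundedly invertible. By the criterion recalled at the outset, $\overline{P_\cD}$ is a self-adjoint regular operator, i.e.\ an unbounded multiplier of $C^*(\cF)$; in particular it admits a continuous functional calculus in $M(C^*(\cF))$, which is exactly what the spectral applications of \cite{Connes79} require.

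The step I expect to be the main obstacle is the transplantation of the Rothschild--Stein construction into the groupoid / Hilbert-module framework: one must verify that the local constructions go through with all estimates uniform over the leaves (here the compactness of $M$ is essential), that the resulting anisotropic convolution kernels on $G$ have enough decay to define bounded multipliers of $C^*(\cF)$---an estimate in the spirit of the Connes--Skandalis bounds for the longitudinal calculus of \cite{AS2}, but adapted to the anisotropic calculus---and that the partition-of-unity cutoffs $\phi_\alpha,\psi_\alpha$ and the lower-order terms $l_X,\tilde l_X$ are absorbed without spoiling these properties. Once this analytic input is in place, the remaining steps (the surjectivity criterion and the Neumann series) are routine.
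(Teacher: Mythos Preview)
Your strategy is sound and would work, but the paper organizes the argument differently in two respects.

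First, the regularity criterion. You invoke Baaj's surjectivity criterion for $\overline{P_\cD}\pm i\lambda$ and then chase a parameter $\lambda\to\infty$ (or iterate a Neumann series) to kill the remainder. The paper instead uses a parameter-free criterion (Theorem~\ref{t:param}, due to Vassout and Androulidakis--Skandalis): it suffices to find $Q,R,S\in C^*(\cF)$ with $I-QP=R$, $I-PQ=S$ such that $PR$ and $PS^*$ extend to elements of $C^*(\cF)$. No invertibility of the remainder is needed, so neither the $\lambda$-dependent estimate nor the Neumann series enters; one only has to check that certain operators lie in $C^*(\cF)$.

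Second, the organization of the parametrix. You work fibrewise on $G^x$ and ask for uniformity in $x$; the paper instead works in foliated charts $\Omega\cong U\times T$, where $\Delta_\cD$ restricts to a smooth family $\{\Delta_y:y\in T\}$ of H\"ormander sum-of-squares operators on the plaque $U$. The Rothschild--Stein construction is then run with $y\in T$ as a smooth parameter (Theorem~\ref{t:param-local}, proved in \cite{YK2} by tracking parameter dependence through the H\"ormander--Melin lifting), producing a \emph{continuous} family $\{Q_y\}$ of compact operators on $L^2(U)$ with compactly supported kernels. This formulation plugs directly into Connes' machinery \cite[Prop.~VIII.7b)]{Connes79}: a continuous (in uniform norm) compactly supported family of compact operators on plaques defines an element of $C^*(\cF)$. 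A partition-of-unity gluing then yields the global $Q$, $R$, $S$.

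What each approach buys: your route is conceptually closer to the classical regular-operator argument, but it loads extra analytic work onto the anisotropic calculus (large-$\lambda$ decay of the remainder in the multiplier norm, or membership of smoothing operators in $C^*(\cF)$). The paper's route avoids all of that by reducing everything to a single verifiable property---continuity of the Rothschild--Stein parametrix in the transverse parameter---and then invoking an off-the-shelf compactness criterion of Connes. The technical heart is the same (Rothschild--Stein adapted to families), but the paper isolates it more cleanly.
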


The proof of Theorem~\ref{t:multi2} will be given in Section~\ref{s:param}. In the case when $\cD$ is a smooth distribution of constant rank, it was given in \cite{YK2}.

\section{Leafwise representations}

For any $x\in M$, there is a natural representation of
$C^{\infty}_c(G)$ in the Hilbert space $L^2(G^x,\nu^x)$ given, for
$k\in C^{\infty}_c(G)$ and $\zeta \in L^2(G^x,\nu^x)$, by
\begin{equation}\label{e:Rx}
R_x(k)\zeta(\gamma)=\int_{G^x}k(\gamma^{-1}\gamma_1)
\zeta(\gamma_1) d\nu^x(\gamma_1),\quad r(\gamma)=x.
\end{equation}

Let us compute the image of $P_\cD$ under the representation $R_x$, $x\in M$. It is a differential operator $\Delta_x$ on $G^x$ such that, for any $k\in C^\infty_c(G)$,  
\[
R_x(P_\cD k)=\Delta_x R_x(k).
\]
For any $k\in C^\infty_c(G)$, the family $\{R_x(k), x\in M\}$ defines an operator $R(k)$ on $C^\infty_c(G)$, which is $C^\infty_c(G)$-linear with respect to the left multiplication. Therefore, we just need to switch from the operator $P_\cD$ on $C^\infty_c(G)$, which is $C^\infty_c(G)$-linear with respect to the right multiplication, to an operator on $C^\infty_c(G)$ defined by a family $\{\Delta_x, x\in M\}$ of differential operators on $G^x$, which is $C^\infty_c(G)$-linear with respect to the left multiplication. 
 
For any $k\in C^\infty(G)$, define a function $\tilde k \in C^\infty(G)$ by $\tilde k(\gamma)=k(\gamma^{-1}), \gamma\in G$. It is easy to check that, for any vector field $X\in C^\infty(M,T\mathcal F)$ and any function $a\in C^\infty(M)$, we have the identities:
\[
\widetilde{(r^*X)k}=(s^*X)\tilde k, \quad \widetilde{(r^*a)k}=(s^*a)\tilde k, \quad k\in C^\infty(G).
\]
Using these identities, from \eqref{e:PDelta} and \eqref{e:Rx}, we easily get 
\[
\Delta_x=\sum_{\alpha=1}^d\sum_{j=1}^p  s^*\phi_\alpha\tilde R_{X^{(\alpha)}_j} R_{X^{(\alpha)}_j}s^*\psi_\alpha,
\]
where
\[
R_X=s^*X+s^*l_X, \quad \tilde R_X=-s^*X+s^*\tilde l_X.
\] 

Consider the longitudinally elliptic operator $\Delta_M$ on $M$ given by
\[
\Delta_M=\sum_{\alpha=1}^M\sum_{j=1}^d  \phi_\alpha (-{X^{(\alpha)}_j}+\tilde l_{X^{(\alpha)}_j}) (X^{(\alpha)}_j+l_{X^{(\alpha)}_j}) \psi_\alpha.
\]
If we restrict this operator to the leaf $L_x$ through $x\in M$ and then lift it to the holonomy covering $G^x$ by use of the map $s:G^x\to L_x$, then we get the operator $\Delta_x$. Remark that the operator $\Delta_M$, in general, does not equal $\Delta_\cD$, but they have the same principal symbol and  coincide when the density $\mu$ is holonomy invariant with respect to $\alpha$ (or equivalently $\delta\equiv 1$). 

As a straightforward consequence of Theorem \ref{t:multi2}, we obtain in a standard way (cf., for instance, \cite{Kord95,Vassout}) the following statement.  

\begin{theorem}\label{t:spectrum}
Denote by $\sigma_{\mathcal  F}(\Delta_\cD)$ the leafwise spectrum of $\Delta_\cD$:
\[
\sigma_{\mathcal  F}(\Delta_\cD)=\overline{\bigcup \{\sigma(\Delta_x): x \in M\}},
\]
where $\sigma(\Delta_x)$ is the spectrum of $\Delta_x$ in $L^{2}(G^x,\nu^x)$, and by $\sigma(\Delta_\cD)$ the spectrum of $\Delta_\cD$ in $L^{2}(M,\mu)$. Then:
\begin{itemize}
\item $\sigma_{\mathcal  F}(\Delta_\cD)\subset \sigma(\Delta_\cD)$;
\item If the holonomy groupoid is amenable (that is, $C^*(\mathcal F)\cong C^*_r(\mathcal F)$), then $\sigma(\Delta_\cD)=\sigma_{\mathcal  F}(\Delta_\cD)$.
\end{itemize}
\end{theorem}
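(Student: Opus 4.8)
The plan is to deduce the statement from Theorem~\ref{t:multi2} by the standard functional-calculus machinery for self-adjoint regular operators on a $C^*$-module, identifying $\sigma_{\cF}(\Delta_\cD)$ with the spectrum of (the image of) $\overline{P_\cD}$ in the reduced algebra and $\sigma(\Delta_\cD)$ with its spectrum in the representation $R_\mu$. By Theorem~\ref{t:multi2} together with the formal self-adjointness of $P_\cD$, the operator $\overline{P_\cD}$ is a self-adjoint regular operator on the Hilbert module $C^*(\cF)$, so it has a continuous functional calculus $f\mapsto f(\overline{P_\cD})\in C^*(\cF)$, $f\in C_0(\R)$, and a spectrum $\Sigma:=\sigma_{C^*(\cF)}(\overline{P_\cD})\subset[0,\infty)$ characterised as the set off which all $f(\overline{P_\cD})$ vanish. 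For any nondegenerate $*$-representation $\pi$ of $C^*(\cF)$ the operator $\pi(\overline{P_\cD})$ is self-adjoint with $\sigma(\pi(\overline{P_\cD}))\subset\Sigma$, with equality if $\pi$ is faithful (an $f\in C_0(\R)$ vanishing on $\sigma(\pi(\overline{P_\cD}))$ satisfies $\pi(f(\overline{P_\cD}))=0$, hence $f(\overline{P_\cD})=0$). Taking $\pi=R_\mu$ recovers the self-adjointness of $\Delta_\cD$ and gives $\sigma(\Delta_\cD)\subset\Sigma$; taking $\pi=R_x$ gives the self-adjointness of $\Delta_x$ and $\sigma(\Delta_x)\subset\Sigma$. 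Let $q\colon C^*(\cF)\to C^*_r(\cF)$ be the canonical surjection; it carries $\overline{P_\cD}$ to a self-adjoint regular operator $q_*\overline{P_\cD}$ with $q(f(\overline{P_\cD}))=f(q_*\overline{P_\cD})$, so $\Sigma_r:=\sigma_{C^*_r(\cF)}(q_*\overline{P_\cD})\subset\Sigma$. Since $\bigoplus_{x\in M}R_x$ is, by the very definition of the reduced norm, a faithful representation of $C^*_r(\cF)$ carrying $q_*\overline{P_\cD}$ to $\overline{\bigoplus_x\Delta_x}$, whose spectrum equals $\overline{\bigcup_x\sigma(\Delta_x)}=\sigma_{\cF}(\Delta_\cD)$, we obtain $\sigma_{\cF}(\Delta_\cD)=\Sigma_r\subset\Sigma$.

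For the inclusion $\sigma_{\cF}(\Delta_\cD)\subset\sigma(\Delta_\cD)$ it now suffices to show that $R_\mu$ weakly contains the regular representation, i.e. $\|R_\mu(k)\|_{B(L^2(M,\mu))}\ge\|k\|_r$ for all $k\in C^\infty_c(G)$: then $\|R_\mu(a)\|\ge\|q(a)\|_r$ for all $a\in C^*(\cF)$, so any $f\in C_0(\R)$ vanishing on $\sigma(\Delta_\cD)=\sigma(R_\mu(\overline{P_\cD}))$ (whence $R_\mu(f(\overline{P_\cD}))=0$) also satisfies $f(q_*\overline{P_\cD})=0$, i.e. $f$ vanishes on $\Sigma_r=\sigma_{\cF}(\Delta_\cD)$. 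In turn, $\|R_\mu(k)\|\ge\|R_x(k)\|$ for each $x$ is obtained by transporting a near-optimal vector from the leaf into $L^2(M,\mu)$: choose $\zeta\in C^\infty_c(G^x)$ with $\|R_x(k)\zeta\|\ge(\|R_x(k)\|-\varepsilon)\|\zeta\|$, use the flow-box (local product) structure of the \emph{regular} foliation $\cF$ to place a thin foliated tubular neighbourhood in $M$ around the compact leaf-piece carrying $\zeta$ and $R_x(k)\zeta$, spread $\zeta$ over the nearby leaves with a transverse cut-off, and note that, restricted to this tube, $R_\mu(k)$ agrees with the leafwise operator up to errors tending to $0$ with the transverse width and up to the smooth factor $\delta^{-1/2}$ of Definition~\ref{defn:Rx}; the latter is harmless because $d\log\delta$ restricted to the leaves is the restriction of a fixed smooth $1$-form on the compact manifold $M$ and is precisely the factor built into the identity $\Delta_\cD R_\mu(k)=R_\mu(P_\cD k)$. (Equivalently, this is the construction of a Weyl sequence for $\Delta_\cD$ out of a Weyl sequence for $\Delta_x$.)

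Finally, if the holonomy groupoid is amenable then $q$ is an isomorphism, so $\Sigma=\Sigma_r=\sigma_{\cF}(\Delta_\cD)$; together with $\sigma(\Delta_\cD)\subset\Sigma$ from the first paragraph and the reverse inclusion just proved, this gives $\sigma(\Delta_\cD)=\sigma_{\cF}(\Delta_\cD)$. Everything here is formal once the inequality $\|R_\mu(k)\|\ge\|k\|_r$ is known, and that estimate is the one genuinely non-trivial point: one must cover compact leaf-pieces of unbounded size by (necessarily thin) foliated tubes, control uniformly both the commutators with the transverse cut-off and the transverse variation of the leafwise operator, keep track of the modular function $\delta$, and treat leaves with non-trivial holonomy — the last being unproblematic, since one may pass to the holonomy cover and the test vectors are compactly supported.
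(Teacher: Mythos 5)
Your functional-calculus skeleton is exactly the standard route, and it is the route the paper itself follows by citation (it gives no proof of Theorem~\ref{t:spectrum}, referring to \cite{Kord95,Vassout,AS2}): Theorem~\ref{t:multi2} makes $\overline{P_\cD}$ a self-adjoint regular operator on $C^*(\cF)$, its spectrum dominates the spectrum of its image under any representation with equality for faithful ones, $\bigoplus_x R_x$ is faithful on $C^*_r(\cF)$, and amenability collapses full and reduced. That part of your argument is fine (modulo the routine verification that $R_\mu(\overline{P_\cD})$ is the closure of $\Delta_\cD|_{C^\infty(M)}$, which uses Theorem~\ref{t:ss}, and similarly for $R_x$ and $\Delta_x$).

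The genuine gap is in the one step you yourself single out as non-trivial: the weak containment $\|R_\mu(k)\|\geq\|k\|_{C^*_r(\cF)}$, equivalently $\|R_\mu(k)\|\geq\|R_x(k)\|$ for \emph{every} $x\in M$. Your tube construction transplants a compactly supported $\zeta$ from the holonomy cover $G^x$ to nearby leaves, and you dismiss holonomy as ``unproblematic.'' It is precisely the problem. The source map $s$ need not be injective on the compact support of $\zeta$ (several sheets of $G^x$ lie over the same plaque of $L_x$), and for the transplanted function to have the right norm behaviour the nearby leaf must separate those sheets, i.e.\ the finitely many relevant non-trivial holonomy germs (and their differences) must actually displace the chosen transversal point. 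For a smooth foliation the fixed-point sets of non-trivial germs are closed sets that can jointly cover a whole punctured transverse neighbourhood of $x$ (e.g.\ one germ trivial on one side of $x$, another trivial on the other side), so there need not be \emph{any} nearby leaf on which your transplantation is injective; on such leaves the spread-out vector overlaps itself and cancellation can destroy the lower bound. Restricting to the residual set of leaves without holonomy does not rescue the argument, because passing from a dense set of leaves to all leaves requires a lower semicontinuity of $x\mapsto\sigma(\Delta_x)$ that is just the same difficulty in disguise. In other words, the inequality you need is exactly the Fack--Skandalis theorem that the representation $R_x$ is weakly contained in $R_\mu$ for every $x$ in the support of a quasi-invariant measure \cite{Fack-Skandalis}, which is what \cite{Kord95} invokes; your sketch does not reprove it, and the uniform control you list (commutators with the transverse cut-off, transverse variation of the kernel, the modular factor $\delta^{-1/2}$) is likewise asserted rather than carried out. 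Replacing the second paragraph of your proposal by an appeal to that result (or supplying its proof, including the holonomy analysis) closes the gap; everything else you wrote then goes through.
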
 

This result was proved in \cite{Kord95} when $\cD$ is a regular foliation and in \cite{AS2} (see also \cite{Andr14}) when $\cD$ is a singular foliation.

 \section{Construction of a parametrix}\label{s:param}

The proof of Theorem~\ref{t:multi2} is based on the following result \cite{Vassout,AS2,YK2}. 

\begin{theorem}\label{t:param}
Let $E$ be a Hilbert module over a $C^*$-algebra $A$ and $P$ be a an unbounded, formally self-adjoint operator on $E$ with dense domain $\mathcal A$. Suppose that $Q$, $R$ and $S$ are elements of $A$, considered as morphisms of the Hilbert module $E$, such that the following identities hold (on $\mathcal A$):  
\[
I-QP = R, \quad I-PQ = S.
\]
Moreover, suppose that the operators $PR$ and $PS^*$ extend to compact morphisms of $E$ and, therefore, belong to the algebra $A$. Then the operator $\overline{P}$ gives rise to an unbounded regular self-adjoint operator on $E$.
\end{theorem}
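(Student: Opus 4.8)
The plan is to obtain self-adjointness and regularity of $\overline{P}$ from the Hilbert-module version of the classical criterion for essential self-adjointness of symmetric operators. Since $P$ is densely defined and formally self-adjoint, $P\subseteq P^{\ast}$; hence $P^{\ast}$ is densely defined, $P$ is closable, and its closure $\overline{P}$ is symmetric, $\overline{P}\subseteq\overline{P}^{\ast}=P^{\ast}$. As for any symmetric operator one has $\|(\overline{P}\pm i)x\|\ge\|x\|$ for $x\in\operatorname{Dom}(\overline{P})$, so $\overline{P}\pm i$ is bounded below; since $\overline{P}$ is closed, as soon as $\operatorname{ran}(\overline{P}+i)$ and $\operatorname{ran}(\overline{P}-i)$ are dense they must equal $E$. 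From there the usual arguments (cf.\ \cite{Lance}) give $\operatorname{Dom}(P^{\ast})=\operatorname{Dom}(\overline{P})$, so $\overline{P}$ is self-adjoint; $(1+\overline{P}^{2})^{-1}=(\overline{P}-i)^{-1}(\overline{P}+i)^{-1}$ is a bounded, everywhere-defined positive morphism of $E$, as is $\overline{P}(1+\overline{P}^{2})^{-1}$; and the graph projection assembled from these exhibits $\Gamma(\overline{P})$ as orthogonally complemented in $E\oplus E$, i.e.\ $\overline{P}$ is regular. Thus everything reduces to showing that $\operatorname{ran}(\overline{P}+i)$ and $\operatorname{ran}(\overline{P}-i)$ are dense, equivalently $\ker(P^{\ast}-i)=\ker(P^{\ast}+i)=\{0\}$.

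To prove this I would run the Hilbert-module analogue of ``elliptic regularity, then integrate by parts.'' Adjointing the parametrix identities --- using that $\mathcal{A}$ is a core for $\overline{P}$ and that $Q,R,S$ are adjointable --- yields $Q^{\ast}P^{\ast}=I-S^{\ast}$ on $\operatorname{Dom}(P^{\ast})$ and $P^{\ast}Q^{\ast}=I-R^{\ast}$ on $E$; and since $PR$ and $PS^{\ast}$ extend to bounded morphisms while $\overline{P}$ is closed, one gets $R(E),S^{\ast}(E)\subseteq\operatorname{Dom}(\overline{P})$ with $\overline{P}R$ and $\overline{P}S^{\ast}$ bounded. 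Now let $u\in\operatorname{Dom}(P^{\ast})$ with $P^{\ast}u=\lambda u$, $\lambda\in\{i,-i\}$. These identities give $u=S^{\ast}u+\lambda Q^{\ast}u$ and $P^{\ast}S^{\ast}u=\lambda R^{\ast}u$, and since $P^{\ast}u=\lambda u\in\operatorname{Dom}(P^{\ast})$ we get $u\in\operatorname{Dom}\!\big((P^{\ast})^{n}\big)$ for every $n$; feeding this regularity back into $u=S^{\ast}u+\lambda Q^{\ast}u$, together with the ``gain of derivatives'' encoded in the boundedness of $\overline{P}S^{\ast}$ and $\overline{P}R$, one concludes $u\in\operatorname{Dom}(\overline{P})$. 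Once this is known, $\overline{P}u=P^{\ast}u=\lambda u$ and the symmetry of $\overline{P}$ force
\[
\lambda\langle u,u\rangle=\langle u,\overline{P}u\rangle=\langle\overline{P}u,u\rangle=\bar{\lambda}\langle u,u\rangle ,
\]
so $(\lambda-\bar{\lambda})\langle u,u\rangle=0$; as $\lambda=\pm i$ this gives $\langle u,u\rangle=0$, i.e.\ $u=0$.

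I expect the main obstacle to be the regularity bootstrap --- upgrading $u\in\operatorname{Dom}(P^{\ast})$ to $u\in\operatorname{Dom}(\overline{P})$ for solutions of $P^{\ast}u=\lambda u$. This is exactly where the hypotheses that $PR$ and $PS^{\ast}$ extend to \emph{compact} morphisms are used in an essential way: they supply the analogue of the Rellich/elliptic-regularity input of the classical argument, and without such a hypothesis a symmetric operator admitting a merely bounded parametrix need not be essentially self-adjoint. The remaining ingredients --- the reduction through the self-adjointness/regularity criterion and the closing integration-by-parts computation --- are routine; full details appear in \cite{Vassout,AS2,YK2}.
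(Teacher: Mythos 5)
Your argument breaks down at its very first reduction, and the flaw is exactly the point of the theorem. (Note also that the paper itself does not prove Theorem \ref{t:param}; it quotes it from \cite{Vassout,AS2,YK2}, and Section \ref{s:param} only verifies its hypotheses for $P_\cD$ --- so your attempt has to stand on its own.) In a Hilbert $C^*$-module, density of $\operatorname{ran}(\overline{P}\pm i)$ is \emph{not} equivalent to $\ker(P^*\mp i)=\{0\}$: dense range does imply trivial kernel of the adjoint, but the converse fails, because a submodule with zero orthogonal complement need not be dense. For instance $t\,C([0,1])\subset C([0,1])$ (the module $C([0,1])$ over itself) has trivial orthogonal complement but its closure is $\{f: f(0)=0\}$; note that multiplication by $t$ is even of the form ``identity plus compact''. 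This gap between vanishing deficiency kernels and surjectivity of $\overline{P}\pm i$ is precisely the difference between formal self-adjointness and regularity, i.e.\ the entire content of the statement; once you replace range density by a deficiency-kernel computation you are running the Hilbert-space argument, which is insufficient here. A telltale sign is that your sketch never actually uses the \emph{compactness} of $PR$ and $PS^*$ (only boundedness of $\overline{P}R$, $\overline{P}S^*$), even though you correctly remark that compactness must be essential. Any correct proof --- in particular those in \cite{Vassout,AS2,YK2} --- must establish the density of $\operatorname{ran}(\overline{P}\pm i)$ (equivalently, the orthogonal complementability of the graph) directly from the parametrix identities, with the compactness of the remainders doing real work; it cannot be routed through $\ker(P^*\mp i)$.

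Independently of this, the bootstrap step does not close even on its own terms. Your dualized identities are correct: $Q(E),R(E),S^*(E)\subset\operatorname{Dom}(\overline{P})$ with $\overline{P}R$, $\overline{P}S^*$ bounded (indeed compact), $P^*Q^*=I-R^*$ on $E$, and $Q^*P^*=I-S^*$ on $\operatorname{Dom}(P^*)$. But nothing in the hypotheses places $Q^*(E)$ inside $\operatorname{Dom}(\overline{P})$ or controls $\overline{P}Q^*$ (you only get $Q^*(E)\subset\operatorname{Dom}(P^*)$), so from $u=S^*u+\lambda Q^*u$ you recover only $u\in\operatorname{Dom}(P^*)$, which you already knew; and the assertion $u\in\operatorname{Dom}((P^*)^n)$ for all $n$ is automatic from $P^*u=\lambda u$ and carries no extra regularity. (The classical elliptic-regularity argument uses instead $u=Q(P^*u)+Ru$, i.e.\ that $QP=I-R$ extends from $\mathcal A$ to $\operatorname{Dom}(P^*)$ --- also not a hypothesis here.) So the upgrade to $u\in\operatorname{Dom}(\overline{P})$ is unsupported, and even if it were proved, triviality of the deficiency kernels would still not yield regularity, by the first paragraph. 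What is salvageable is your reduction shell: if you show $\operatorname{ran}(\overline{P}\pm i)$ is \emph{dense}, then closedness and the lower bound $\|(\overline{P}\pm i)x\|\geq\|x\|$ give surjectivity, and Lance's criterion \cite{Lance} yields self-adjointness and regularity, after which your remarks about $(1+\overline{P}^2)^{-1}$ and the graph projection are fine; the missing (and main) work is to produce that density from the parametrix identities and the compactness hypotheses, as in the cited references.
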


The rest of this section is devoted to a construction of a parametrix for the operator $P_\cD$, satisfying the conditions of Theorem~\ref{t:param}. 

\begin{theorem}\label{t:param1}
Let $(M,\cD)$ be a smooth distribution such that $\cF = \cU(\cD)$ is a regular foliation. Let $\Delta_{\cD}$ be the horizontal Laplacian of $(M,\cD)$ associated with some choice of a Riemannian metric on $\cD$ and a positive smooth density $\mu$ on $M$. There are $Q, R, S\in C^*(\mathcal F)$ such that   
\[
k-P_\cD(k)\ast Q  = k\ast R, \quad k-P_\cD (k\ast Q) = k\ast S, \quad k\in C^\infty_c(G).
\]
Moreover, the operators $P_\cD R$ and $P_\cD S^*$ extend to compact morphisms of the Hilbert module $C^*(\mathcal F)$ and, therefore, belong to the algebra $C^*(\mathcal F)$.
\end{theorem}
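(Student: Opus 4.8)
The operator $P_\cD$ is a second-order longitudinal differential operator on the holonomy groupoid $G$ which, locally, is a sum of squares of longitudinal vector fields lifted from $M$ — precisely of the form to which the H\"ormander–Rothschild–Stein machinery applies. The strategy is to build a parametrix for $P_\cD$ inside the longitudinal pseudodifferential calculus $\Psi^*(\cF)$ of \cite{AS2}, using hypoellipticity to gain a positive number of derivatives $\epsilon$ (Theorem~\ref{t:Hs-hypo}), and then to cash in this gain to deduce compactness. Concretely: first, cover $M$ by foliated charts adapted to the local presentations $(E_{U_\alpha},\rho_{U_\alpha})$ and on each such chart realize $\Delta_\cD|_{U_\alpha}=\sum_j (X_j^{(\alpha)})^*X_j^{(\alpha)}$ as a H\"ormander sum-of-squares operator along the leaves. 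The bracket-generating hypothesis $\cF=\cU(\cD)$ is exactly what guarantees the vector fields $X_j^{(\alpha)}$, together with their iterated brackets, span $T\cF$; this is the geometric input needed to run Rothschild–Stein.

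\textbf{Step 1: A leafwise Rothschild–Stein parametrix.} Following \cite{Rothschild-Stein}, lift the sum-of-squares operator to a free nilpotent Lie group of the appropriate step, write down the left-invariant fundamental solution there, transfer it back via the Rothschild–Stein approximation theorem, and patch the local parametrices with the partition of unity $\{\phi_\alpha\}$. This produces an operator $Q_0$ in the longitudinal calculus $\Psi^{-\epsilon}(\cF)$ — of negative order $\epsilon>0$, the same $\epsilon$ as in Theorem~\ref{t:Hs-hypo} — such that $I-Q_0 P_\cD$ and $I-P_\cD Q_0$ are longitudinal pseudodifferential operators of order $<0$ (in fact order $-\epsilon$ after one term, order $-\infty$ after the full asymptotic summation). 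The key point I need to verify is that the Rothschild–Stein construction is \emph{local in the leaf direction and transversally smooth}, so that it produces a genuine $G$-operator in the sense of \cite{AS2}, i.e.\ a family $\{Q_{0,x}\}$ over $M$ whose Schwartz kernel is a distribution on $G$, conormal along $M\subset G$ with the right transverse regularity. Since the nilpotent approximation and the parametrix symbol depend smoothly on the base point, this should go through, but it is the step that requires the most care.

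\textbf{Step 2: From pseudodifferential parametrix to the algebra $C^*(\cF)$.} A longitudinal $\Psi$DO of sufficiently negative order maps into $C^*(\cF)$; more precisely, an order $-\infty$ operator, or even an order $<0$ operator with compactly supported kernel, defines an element of $C^*(\cF)$ and acts as a compact morphism of the Hilbert module $C^*(\cF)$ — this is standard in the Connes/Androulidakis–Skandalis framework. So set $Q=Q_0$ (or $Q$ the element of $C^*(\cF)$ represented by $Q_0$), let $R$ and $S$ be the elements of $C^*(\cF)$ with $I-Q P_\cD=R$, $I-P_\cD Q=S$ on $\mathcal A=C^\infty_c(G)$. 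Rewriting in terms of the right-module convolution action gives exactly the two displayed identities $k-P_\cD(k)\ast Q=k\ast R$ and $k-P_\cD(k\ast Q)=k\ast S$. Then $P_\cD R$ has order $\le 2-\epsilon-\epsilon$-ish; iterating the parametrix construction (replacing $Q_0$ by $Q_0+Q_0(I-P_\cD Q_0)+\cdots$, a finite or asymptotic Neumann sum) drives the remainders to order $-\infty$, so that $P_\cD R$ and $P_\cD S^*$ become operators of negative order with compactly supported kernels, hence compact morphisms of $C^*(\cF)$, hence elements of $C^*(\cF)$. This is where Theorem~\ref{t:Hs-hypo} is genuinely used: it is the a priori estimate that makes the remainders \emph{smoothing} rather than merely lower order, so that the composition with the second-order $P_\cD$ still lands in the algebra.

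\textbf{Main obstacle.} The hard part is Step~1: verifying that the Rothschild–Stein parametrix, originally a purely local construction on $\R^n$, can be organized into an operator on the holonomy groupoid $G$ belonging to the calculus $\Psi^*(\cF)$ of \cite{AS2}. This means checking (a) that the nilpotent-group fundamental solutions glue to a well-defined distribution on $G$, conormal to $M\subset G$ with the correct singularity type, (b) that it is smooth in the transverse variables and in the choice of local presentation (using Proposition~\ref{prop:equivminlocpr} to compare presentations on overlaps), and (c) that composing such operators with the $L_X$'s of Section~\ref{s:multiplier} respects the symbol calculus so that orders add correctly. Once this is in place, Step~2 is essentially bookkeeping: asymptotically summing the parametrix, reading off that the remainders are smoothing, and invoking the standard fact that smoothing longitudinal operators with compact leafwise-support lie in $C^*(\cF)$ and act compactly on it as a Hilbert module. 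The conclusion then follows by feeding $Q$, $R$, $S$ into Theorem~\ref{t:param}.
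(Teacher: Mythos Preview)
Your overall architecture---localize to foliated charts, apply Rothschild--Stein to the sum-of-squares operator along each plaque, verify transversal regularity, and glue with a partition of unity---is exactly the paper's. Where you diverge is in the framework you choose to house the parametrix. You want to place $Q_0$ in the longitudinal calculus $\Psi^{-\epsilon}(\cF)$ of \cite{AS2} and then invoke general facts about negative-order longitudinal $\Psi$DOs lying in $C^*(\cF)$. This is a genuine gap: the Rothschild--Stein parametrix is \emph{not} a standard type-$(1,0)$ pseudodifferential operator; it lives in an anisotropic calculus adapted to the nilpotent approximation, and there is no reason to expect it to belong to $\Psi^{-\epsilon}(\cF)$ as that class is defined in \cite{AS2}. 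Your order-counting in Step~2 (``$2-\epsilon-\epsilon$-ish'') implicitly assumes a symbol calculus in which the R--S parametrix and longitudinal differential operators compose with additive orders; setting that up is itself a substantial project, and Theorem~\ref{t:Hs-hypo}, which you invoke, is an a~priori estimate, not a statement about membership in a symbol class.

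The paper sidesteps the $\Psi$DO calculus entirely. In a foliated chart $\Omega\cong U\times T$ it views $\Delta_\cD|_\Omega$ as a smooth family $\{\Delta_y\}_{y\in T}$ of H\"ormander operators on $U$, and proves (Theorem~\ref{t:param-local}, established in \cite{YK2} by tracking parameter-dependence through the constructions of \cite{Rothschild-Stein}) that the R--S parametrix yields a family $\{Q_y\}$ of compact operators on $L^2(U)$, continuous in $y$ in the uniform operator norm, such that the remainders $R_y,S_y$ and also $\Delta_yR_y$, $\Delta_yS_y^*$ are compact and continuous in $y$. It then invokes Connes' criterion \cite[Prop.~VIII.7b]{Connes79}: any such continuous family of compacts with compactly supported kernel defines an element of $C^*(\cF)$. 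Gluing via a partition of unity gives $Q,R,S$ directly, with no symbol calculus, no Neumann iteration, and no appeal to Theorem~\ref{t:Hs-hypo}. Your ``transversally smooth'' intuition is exactly right---that \emph{is} the crux---but the clean way to exploit it is the compactness-plus-continuity route rather than a detour through $\Psi^*(\cF)$.
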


\begin{proof}
First, we consider the local setting. Let $\Omega\cong U\times T$, $U\subset \mathbb R^p$,  $T\subset \mathbb R^q$, be an arbitrary foliated coordinate neighborhood such that there exist a local representation $(E_{\Omega},\rho_{\Omega})$ defined over $\Omega$ and a local orthonormal frame $(\omega_1,\ldots,\omega_{d})$ of $E_{\Omega}$. Then (see \eqref{e:sum_of_sq}) the restriction of $\Delta_\cD$ to $\Omega$ is written as
\begin{equation}\label{e:DeltaOmega}
\Delta_\cD\left|_{\Omega}\right.=\sum_{j=1}^{d}(X_j)^* X_j,
\end{equation}
where $X_j=\rho_{\Omega}(\omega_j)\in \cD\left|_{\Omega}\right.$, $j=1,\ldots, d$.

Since the vector fields $X_j$ are tangent to the foliation $\mathcal F$, they are tangent to the plaques $U\times \{y\}$ of $\Omega$, and, therefore, $X_j(x,y)\in \mathbb R^p\cong \mathbb R^p\oplus \{0\}\subset \mathbb R^p\oplus \mathbb R^q$ for any $(x,y)\in U\times T$. In particular, any $X_j$ is given by a family $\{X_{j,y}, y\in T\}$ of vector fields on $U$. It is easy to see that, for any $y\in T$, the family $\{X_{j,y}, j=1,\ldots,d\}$ of vector fields  on $U$ is bracket-generating.

For any function $a\in C^\infty(U\times T)$, we will denote by $a_y\in C^\infty(U\times \{y\})\cong C^\infty(U)$ its restriction to $U\times \{y\}, y\in T$. It is easy to check that the restriction of the operator $\Delta_M$ to $\Omega$ is given by a smooth family $\{\Delta_y, y\in T\}$ of second order differential operators on $U$ of the form
\begin{equation}\label{e:Delta-local}
\Delta_y=-\sum_{j=1}^d X^2_{j,y}+\sum_{j=1}^da_{j,y}X_{j,y}+b_y, \quad y\in T, 
\end{equation}
where $a_j,b\in C^\infty(U\times T)$.

A crucial role in the proof of Theorem~\ref{t:param1} is played by the following fact. 

\begin{theorem}\label{t:param-local}
Let $\{\Delta_y, y\in T\}$ be a smooth family of second order differential operator on $U$ of the form \eqref{e:Delta-local}, where, for any $y\in T$, the family $\{X_{j,y}, j=1,\ldots,d\}$ of vector fields on $U$ is bracket-generating.

For any $(x_0,y_0)\in \Omega$, there exist neighborhoods $U_0\subset U$ of $x_0$ and $T_0\subset T$ of $y_0$ such that, for any $\phi\in C^\infty_c(U_0\times T_0)$, there exists a family $\{Q_y, y\in T_0\}$ of compact operators in $L^2(U_0)$, continuous in the uniform operator topology, with the Schwartz kernel compactly supported in $U_0\times U_0\times T_0$, such that
\[
Q_y\Delta_y=\phi_y I-R_y,\quad \Delta_y Q_y=\phi_y I-S_y,\quad y\in T_0,
\]
where the operators $R_y$, $S_y$, $\Delta_y R_y$ and $\Delta_y S^*_y$ on $C^\infty(U_0)$ extend to compact operators in $L^2(U_0)$, depending continuously on $y\in T_0$ in the uniform operator topology.
\end{theorem}

The proof of Theorem~\ref{t:param-local} is given in \cite{YK2}. It uses a classical parametrix construction for hypoelliptic H\"ormander sum of squares type operators given in the paper of Rothschild and Stein \cite{Rothschild-Stein}. The main difficulty is to prove that, given a smooth family of hypoelliptic H\"ormander sum of squares type operators, one can   construct a smooth family of their parametrices. For this, we follow the constructions of \cite{Rothschild-Stein}, checking in the process the smooth dependence of all constructed objects on the family parameter. In \cite{YK2}, we consider the case when the distribution $\cD$ is of constant rank and assume that the vector fields $X_j, j=1,\ldots,d,$ in \eqref{e:DeltaOmega} are linearly independent. It is easy to see that this proof can be easily extended to the current setting, first of all, because the initial step in the proof, the Rothschild-Stein lifting theorem and its proof given in \cite{Hoermander-Melin}, hold in this generality. 

Let us turn to the global setting. For convenience of notation, we switch from the operator $P_\cD$, which is $C^\infty_c(G)$-linear with respect to the right multiplication, to the operator $\{\Delta_x, x\in M\}$, which is $C^\infty_c(G)$-linear with respect to the left multiplication. 

Recall the notion of $G$-operator introduced in \cite{Connes79}. For any $\gamma\in G$, $\gamma : x\to y$, define the left translation operator $L(\gamma) : C^\infty(G^x)\to C^\infty(G^x)$ by
\[
L(\gamma)f(\gamma^\prime)=f(\gamma^{-1}\gamma^\prime), \quad \gamma^\prime \in G^y.
\]
A $G$-operator is a family $\{P_x, x\in M\}$, where $P_x$ is a linear continuous map in $C^\infty_c(G^x)$, which is left-invariant: $L(\gamma)\circ P_x=P_y\circ L(\gamma)$ for any $\gamma : x\to y$. It is easy to see that $\{P_x, x\in M\}$ is a $G$-operator if and only if the corresponding operator in $C^\infty_c(G)$ is $C^\infty_c(G)$-linear with respect to the left multiplication. In particular, the family $\{\Delta_x, x\in M\}$ is a $G$-operator, and, for any $k\in C^\infty_c(G)$,  the family $\{R_x(k), x\in M\}$ given by \eqref{e:Rx} is a $G$-operator

The following construction given in \cite{Connes79} (see, in particular,  \cite[Proposition VIII.7b)]{Connes79}) allows one to construct $G$-operators, starting from a continuous family of integral operators, defined in a foliated coordinate neighborhood. Let $\Omega\cong U\times T$ be a foliated coordinate neighborhood and $\{P_y : y\in T\}$ be a continuous family of operators in $C^\infty(U)\cong C^\infty(U\times \{y\})$. Assume that the Schwartz kernel $k_y\in C^{-\infty}(U\times U)$ is compactly supported in $U\times U\times T$. (In this case, we will say that the family  $\{P_y\}$ is compactly supported in $U\times T$.) A natural embedding of $W=U\times U\times T$ into $G$ allows one to consider $k$ as a distribution on $G$ and, therefore, define the corresponding $G$-operator $\{P^\prime_x=R_x(k) : x\in M\}$ by the formula \eqref{e:Rx}. This operator can be also described as follows. For an arbitrary $x\in M$, we define an equivalence relation on $G^x\cap s^{-1}(\Omega)$, setting $\gamma_1\sim \gamma_2$, if $\gamma_1^{-1}\gamma_2\in W$. Each equivalence class $\ell$ is open (since $W$ is open) and connected (since $U$ is connected). Therefore, $\ell$ is a connected component and $G^x\cap s^{-1}(\Omega)=\cup \ell $ is the representation of $G^x\cap s^{-1}(\Omega)$ as the union of connected components. The restriction of $s$ to $\ell$ is a homeomorphism of $\ell$ on some plaque $s(\ell)=U\times\{y\}$ of $\Omega$. The kernel  $K_x(\gamma,\gamma_1)$ of the operator $P^\prime_x : C^\infty(G^x)\to C^\infty(G^x)$ may be different from zero only if $\gamma$ and $\gamma_1$ are in the same component of $G^x\cap s^{-1}(\Omega)$, and the restriction of the operator to the component $\ell$ corresponds under the map $s$ to an operator $P_y$, acting on $s(\ell)=U\times\{y\}$. 

By \cite[Proposition VIII.7b)]{Connes79}, if the operator $P_y$ is compact in $L^2(U)$ for any $y\in T$  and the map $y\to P_y$ is continuous in the uniform operator norm in $L^2(U)$, then the corresponding $G$-operator $P^\prime$ belongs to $C^*(\mathcal F)$. 

The proof of Theorem~\ref{t:param1} is completed by means of the standard gluing construction of local parametrices (cf. \cite[Proposition IX.2)]{Connes79}). Let $M=\cup_i\Omega_i$ be a finite covering of $M$ by foliated coordinate neighborhoods, for each of which the statement of Theorem~\ref{t:param-local} holds, $\Omega_i\cong U_i\times T_i$, $\phi_i$ be a partition of unity, subordinate to this covering, $\psi_i\in C^\infty_c(\Omega_i)$ be functions such that $\psi_i=1$ on the support of $\phi_i$. Let $\{\Delta_{i,y} : y\in T_i\}$ be a smooth family of differential operators on  $U_i$ defined by \eqref{e:Delta-local} in a foliated coordinate neighborhood $\Omega_i\cong U_i\times T_i$. Observe that the $G$-operator $(\psi_i\Delta_i)^\prime$ obtained by use of the above construction from the family $\{\psi_{i,y}\Delta_{i,y}: y\in T_i\}$, coincides with the differential $G$-operator $\{s^*\psi\Delta_x: x\in M\}$. By Theorem~\ref{t:param-local}, for any $i$, there is a continuous family $\{Q_{i,y} : y\in T_i\}$ of compact operators in $L^2(U_i)$ with compact support with $U_i\times T_i$ such that 
\[
Q_{i,y}\Delta_{i,y}=\phi_{i,y} I-R_{i,y},\quad \Delta_{i,y} Q_{i,y}=\phi_{i,y} I-S_{i,y},\quad y\in T_i,
\]
where the operators $R_{i,y}$, $S_{i,y}$, $\Delta_{i,y} R_{i,y}$ and $\Delta_{i,y} S^*_{i,y}$ on $C^\infty(U_i)$ extend to compact operators in $L^2(U_i)$, continuously depending on $y\in T$ in the uniform operator topology. It is easy to check that the $G$-operator $Q=\sum_i Q^\prime_{l,i}(\psi_i\circ s)$ is a desired one. 
\end{proof}

\end{document}